\documentclass[final,hidelinks,onefignum,onetabnum]{siamart251216}

\usepackage[T1]{fontenc}
\usepackage[utf8]{inputenc}
\usepackage{lmodern}
\usepackage{amssymb,mathtools}
\usepackage{microtype}
\usepackage{booktabs,multirow}
\usepackage{xurl}
\usepackage{placeins}

\newsiamremark{assumption}{Assumption}
\newsiamremark{remark}{Remark}
\crefname{assumption}{Assumption}{Assumptions}
\crefname{remark}{Remark}{Remarks}

\newcommand{\R}{\mathbb{R}}
\newcommand{\sphere}{\mathbb{S}}
\newcommand{\K}{\mathbb{K}}
\newcommand{\bs}[1]{\boldsymbol{#1}}
\newcommand{\X}{\bs{X}}
\newcommand{\Y}{\bs{Y}}
\newcommand{\n}{\bs{n}}
\newcommand{\Pmat}{\bs{P}}
\newcommand{\Amat}{\bs{A}}
\newcommand{\Z}{\bs{Z}}
\newcommand{\avec}{\bs{a}}
\newcommand{\vvec}{\bs{v}}
\newcommand{\wvec}{\bs{w}}
\newcommand{\om}{\bs{\omega}}
\newcommand{\q}{\bs{q}}

\newcommand{\N}{\bs{N}}
\newcommand{\id}{\bs{\mathrm{id}}}
\newcommand{\dd}{\,\mathrm{d}}
\newcommand{\grad}{\nabla_\Gamma}
\newcommand{\lap}{\Delta_\Gamma}
\newcommand{\dt}{\partial_t^{\bullet}}
\newcommand{\dX}{\delta\X^{m+1}}
\newcommand{\dH}{\delta H^{m+1}}
\newcommand{\B}{\mathcal{B}}
\newcommand{\D}{\mathcal{D}}
\DeclareMathOperator{\tr}{tr}

\DeclareMathOperator{\sym}{sym}

\headers{SP-PFEM for coupled geometric flows}{Y. Zhang}
\title{A unified structure-preserving framework for geometric flows with coupled orientation and curvature dependence}
\author{Yulin Zhang\thanks{School of Mathematical Sciences and Institute of Natural Sciences,
Shanghai Jiao Tong University, Shanghai 200240, China
(\email{yulin.zhang@sjtu.edu.cn}).}}
\hypersetup{
  pdftitle={A unified structure-preserving framework for geometric flows with coupled orientation and curvature dependence},
  pdfauthor={Yulin Zhang}
}

\begin{document}
\maketitle

\begin{abstract}
We develop a structure-preserving parametric finite element framework
for geometric flows whose energy density couples the unit normal and
the curvature. This class includes bending energies with
orientation-dependent rigidity or spontaneous curvature. A common
fully discrete formulation treats closed curves in two dimensions and
closed surfaces in three dimensions, and accommodates the $L^2$ flow,
curve or surface diffusion, and the area- or volume-constrained $L^2$
flow. The formulation couples the geometric and curvature updates so
that their contributions satisfy a discrete energy inequality. It uses
continuous piecewise linear elements, a surface energy matrix, and a
mass-lumped projection of the curvature derivative of the density.
For positive densities satisfying a directional condition and convexity
in curvature, we prove energy dissipation without a time-step
restriction. The diffusion and constrained flows also preserve the
enclosed area or volume exactly. The analysis allows non-even
anisotropies and nonseparable dependence on orientation and curvature.
Numerical experiments exhibit approximately second-order convergence
in the manifold distance and confirm the
discrete structural properties. Shape relaxation under an anisotropic
Helfrich-type energy illustrates the use of the framework for coupled
directional and bending effects.
\end{abstract}

\begin{keywords}
Geometric flows, parametric finite element method, coupled orientation and
curvature dependence, energy dissipation, area and volume conservation
\end{keywords}

\begin{MSCcodes}
65M60, 65M12, 35K55, 53C44
\end{MSCcodes}

\section{Introduction}\label{sec:introduction}

The evolution of an interface is often governed by a competition between
surface tension and bending. Surface tension favors a reduction of
interfacial energy, while bending penalizes curvature or promotes a
preferred curvature. Directional dependence adds a further influence on
the evolving shape: the energetic cost of an interface can vary with its
orientation. These mechanisms appear in models of anisotropic interfaces
and elastic membranes. In particular, the classical bending energies of
Canham and Helfrich relate membrane shape to curvature
\cite{canham1970minimum,helfrich1973elastic}, while anisotropic surface
energies describe the preference for particular interface orientations.
Curvature dependence also provides a regularization of strongly
anisotropic interface evolution, as in the theory of Gurtin and Jabbour
\cite{gurtin2002interface}. Computing their combined effects calls for
numerical methods that retain
the dissipation and conservation laws governing the evolution.

In this work, we consider energies of the form
\begin{equation}\label{eq:intro-energy}
  W(\Gamma)=\int_\Gamma f(\n,H)\dd A,
  \qquad \Gamma\subset\R^d,\quad d=2,3,
\end{equation}
where $\n$ is the unit normal and $H$ denotes the signed curvature of a
planar curve or the sum of the principal curvatures of a surface. The
measure $\dd A$ denotes arclength for $d=2$ and surface area for $d=3$.
The density $f$ need not be a sum of an orientation-dependent term and a
curvature-dependent term. For example,
\begin{equation}\label{eq:intro-coupled-density}
  f(\n,H)=\gamma(\n)
       +\frac{\epsilon^2}{2}a(\n)\bigl(H-H_\ast(\n)\bigr)^2
\end{equation}
with $\gamma>0$ and $a>0$ allows the bending rigidity or preferred
curvature to depend on the
surface orientation. Such a choice is an anisotropic extension of the
classical Helfrich bending mechanism, with the anisotropy prescribed
relative to a fixed spatial frame. It provides a way to investigate how
directional preferences compete with bending during shape relaxation
(see Figure~\ref{fig:orientation-curvature}).

\begin{figure}[!htbp]
  \centering
  \includegraphics[width=\textwidth]{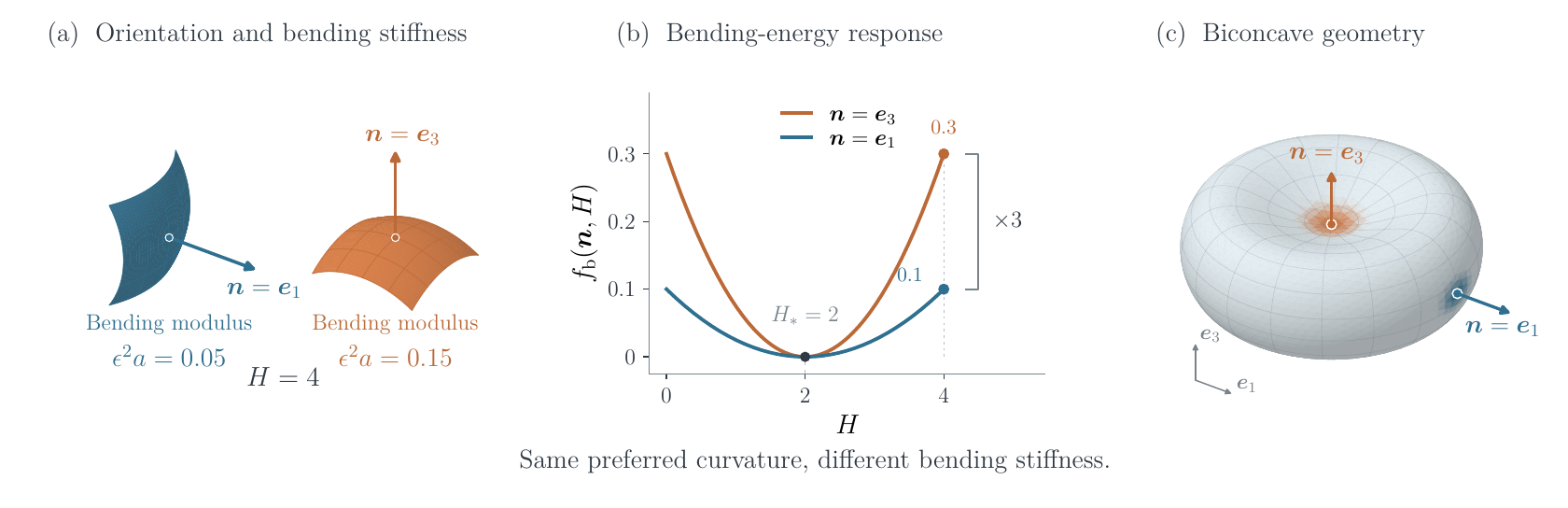}
  \caption{Orientation-dependent bending for
$f(\n,H)=1+\tfrac14\sum_{i=1}^3n_i^4
+\tfrac{\epsilon^2}{2}a(\n)(H-2)^2$,
with $a(\n)=1+2n_3^2$ and $\epsilon^2=0.05$.
(a) Patches with the same curvature $H=4$ and different central normals;
the bending moduli $\epsilon^2a(\n)$ are $0.05$ and $0.15$.
(b) The bending contribution $f_{\mathrm b}=f-\gamma$, with
$\gamma(\n)=1+\tfrac14\sum_{i=1}^3n_i^4$.
Both curves are minimized at $H=2$; at $H=4$, their values are $0.1$ and $0.3$.
(c) The corresponding normal directions on the biconcave initial geometry.
The colored neighborhoods identify the marked points, whose curvatures
need not agree.}
  \label{fig:orientation-curvature}
\end{figure}

In this paper, we study the $L^2$ and $H^{-1}$ gradient flows of
\eqref{eq:intro-energy}, together with the $L^2$ flow constrained to
preserve the enclosed area or volume. The $H^{-1}$ flow is referred to as
curve diffusion in two dimensions and surface diffusion in three
dimensions. All these flows dissipate the energy; diffusion and the
constrained $L^2$ flow also preserve the enclosed measure. These laws are
particularly useful in computations of shape relaxation, where an
accumulated volume error can alter the limiting shape and an artificial
increase in energy can obscure the balance between competing effects.
For a density with a nondegenerate curvature dependence, the $L^2$ and
diffusion flows are fourth- and sixth-order geometric equations,
respectively. Their high differential order and nonlinear coupling to
the evolving geometry make a simultaneous preservation of these laws
nontrivial after discretization.

Over the past few decades, various numerical methods have been developed
for curvature-driven interface evolution, including finite element
methods \cite{raetz2006surface}, finite difference methods
\cite{du2004phase}, level set methods
\cite{osher1988fronts,smereka2003semiimplicit}, phase field methods
\cite{du2004phase,raetz2006surface}, particle methods
\cite{leung2011particle}, finite volume methods
\cite{mikula2010flowing}, threshold dynamics \cite{merriman1994motion},
evolving surface finite element methods (ESFEMs)
\cite{kovacs2019convergent}, and parametric finite element methods
(PFEMs) \cite{barrett2008hypersurfaces,barrett2020parametric}. Among these
approaches, the energy-stable PFEMs developed by Barrett, Garcke, and
N\"urnberg (BGN) \cite{barrett2008hypersurfaces,barrett2020parametric}
are particularly attractive for their good mesh quality in computations
of mean curvature flow and surface diffusion. PFEMs combine direct
resolution of the interface with a variational treatment of its geometry.
Their unknowns lie on the evolving curve or surface, avoiding a bulk
grid and the resolution of a diffuse transition layer. The weak
formulation allows curvature and high-order terms to be treated with
low-order elements, while providing a systematic way to reproduce energy
dissipation and area or volume conservation. These features are
particularly useful when orientation and curvature jointly determine
the driving force.

Building on this variational framework, Bao and Zhao
\cite{bao2021structure} introduced a structure-preserving PFEM for
surface diffusion that combines unconditional energy stability with
exact conservation of the enclosed area or volume. Their use of a
time-averaged normal relates the discrete normal displacement to the
actual change in the enclosed measure.

For energies depending on orientation, surface energy matrices have
provided a systematic route to stable parametric approximations.
Bao and Li developed a symmetrized PFEM for anisotropic surface diffusion
in three dimensions \cite{bao2023symmetrized3D}, a structure-preserving
method for planar anisotropic geometric flows \cite{bao2024anisotropic},
and a unified treatment of anisotropic surface diffusion in two and three
dimensions \cite{bao2025unified}. Zhang, Li, and Ying
\cite{zhang2025stabilized} analyzed stabilized approximations for general
planar surface energies. Li, Ying, and Zhang \cite{li2025optimal}
established the optimal directional condition within the local energy
estimate framework, and Bao, Li, Ying, and Zhang \cite{bao2026alpha}
subsequently organized the planar surface energy matrices into an
$\alpha$-dependent family. These developments allow increasingly general
anisotropies to be treated while retaining discrete energy dissipation
and area or volume conservation.

Curvature-dependent energies have motivated a complementary line of
work. Dziuk \cite{dziuk2008willmore} developed a parametric finite element
method for Willmore flow with a stable spatial discretization.
Do\u{g}an and Nochetto \cite{dogan2012first} derived a general
first-variation formula for surface energies depending on geometric
quantities, including the normal and curvature. For Willmore flow, Bao
and Li \cite{bao2025planarWillmore} constructed an unconditionally
energy-stable PFEM in two dimensions. Bao, Li, and Wang
\cite{bao2025energy3D} developed an energy-stable method in three
dimensions using a weak evolution equation for the mean curvature, and
extended the approach to energies given by convex functions of the mean
curvature. Garcke, N\"urnberg, and Zhao \cite{garcke2026splitting}
constructed a fully discrete method for Willmore flow with spontaneous
curvature in two and three dimensions. Their normal--tangential velocity
splitting leads to uniquely solvable linear systems and an unconditional
energy stability estimate. Liu and Zhao
\cite{liu2026helfrich} recently proposed a structure-preserving PFEM
for the constrained Helfrich flow of curves and surfaces, with enclosed
volume and surface area constraints. Such formulations make it possible to work with piecewise linear finite elements despite the
higher order of the geometric evolution equations.

Extending these approaches to a nonseparable density
\eqref{eq:intro-energy} requires the directional and curvature responses
to be discretized together. A change in curvature alters the directional
energy, while a change in orientation alters the curvature response
$f_H(\n,H)$. On a polygonal or triangulated interface, the normal is
discontinuous across elements, so $f_H(\n,H)$ generally has distinct
traces at a shared vertex even when the curvature field is continuous.
The discrete force and curvature equations must handle these traces
consistently to retain the energy law.

The main contributions of this work are as follows.
\begin{enumerate}
  \renewcommand{\labelenumi}{(\roman{enumi})}
  \setlength{\itemsep}{0.25em}
  \setlength{\parsep}{0pt}
  \interlinepenalty=10000
\item We develop a unified, fully discrete PFEM for closed curves and
surfaces with coupled orientation and curvature dependence. Using
continuous piecewise linear elements, the framework treats the $L^2$
flow, curve or surface diffusion, and the area- or volume-constrained
$L^2$ flow, including energies with orientation-dependent bending
rigidity and spontaneous curvature.
\item Under \cref{ass:density}, we prove unconditional discrete energy
dissipation for all three flows, together with exact conservation of
the enclosed area or volume for diffusion and the constrained $L^2$
flow. The analysis allows nonseparable densities and non-even
anisotropies, without requiring joint convexity in orientation and
curvature.
\item Numerical experiments in two and three dimensions exhibit
approximately second-order convergence in the manifold distance,
confirm the discrete structural properties, and illustrate shape
relaxation under anisotropic surface and bending energies.
\end{enumerate}

The remainder of the paper is organized as follows.
Section~\ref{sec:geometry} introduces the geometric identities and weak
formulations. Section~\ref{sec:matrices} recalls the local energy estimate
and states the assumptions on the density. The finite element scheme and
its energy analysis are presented in Sections~\ref{sec:discrete}
and~\ref{sec:energy-proof}, respectively. Section~\ref{sec:volume}
treats the area- and volume-preserving flows. Numerical results are
reported in Section~\ref{sec:numerical}, followed by concluding remarks
in Section~\ref{sec:conclusions}.

\section{Geometric setting and weak formulation}\label{sec:geometry}

\subsection{The surface energy}
Let $\Gamma(t)\subset\R^d$, $d=2,3$, be a smooth, closed, oriented hypersurface: a planar curve for $d=2$ and a surface for $d=3$. It is parametrized by $\X(\cdot,t)\colon\Gamma_0\to\Gamma(t)$. We write $\n$ for its outward unit normal and $\vvec=\dt\X$ for the material velocity. The derivative $\dt$ is taken at a fixed point of the reference surface. In particular, $\vvec$ may have a tangential component.

We use the row convention for the gradient of a vector field: $(\grad\wvec)_{ij}=D_jw_i$, where $D_j$ is the $j$th component of the surface gradient. Set
\begin{equation}\label{eq:geometry}
  \Pmat=I_d-\n\n^T=\grad\X,\qquad
  \Amat=\grad\n,\qquad H=\tr\Amat.
\end{equation}
The Weingarten matrix $\Amat$ is symmetric and satisfies $\Amat\n=0$. Our convention for $H$ is the sum of the principal curvatures, reducing to the signed curvature for a planar curve. Thus $H=(d-1)/R$ on an outward-oriented circle or sphere of radius $R$. For matrices, $U:V=\tr(U^TV)$ and $|U|^2=U:U$. Scalar and vector $L^2(\Gamma)$ inner products are denoted by $(\cdot,\cdot)_\Gamma$, and the corresponding matrix inner product by $\langle\cdot,\cdot\rangle_\Gamma$.

For the energy \eqref{eq:intro-energy}, we take a density
$f\colon\sphere^{d-1}\times\R\to\R_{>0}$.
Throughout the continuous derivation, the surface and the density are
sufficiently smooth for the stated derivatives and integrations by parts.
Convexity in $H$ is required only for the discrete energy estimate.

We denote the derivative with respect to the second argument by $f_H$ and introduce
\begin{equation}\label{eq:density-derivatives}
  \avec(\n,H)=\nabla_{\sphere^{d-1}}f(\n,H),\qquad
  r=f_H(\n,H).
\end{equation}
Here $H$ is held fixed when taking the spherical gradient. Equivalently, $\avec=\Pmat\nabla_{\bs p}\widetilde f(\n,H)$ for any smooth extension $\widetilde f$ off the unit sphere. This definition is independent of the extension and gives $\avec\cdot\n=0$. The dependence on curvature in \eqref{eq:intro-energy} is through $H$; an independent dependence on Gaussian curvature would require additional terms.

\subsection{An evolution identity for the curvature}
The normal and the surface measure satisfy
\begin{subequations}\label{eq:transport}
\begin{align}
  \dt\n&=-(\grad\vvec)^T\n,\label{eq:normal-transport}\\
  \frac{\mathrm d}{\mathrm dt}\int_{\Gamma(t)}u\dd A
  &=\int_{\Gamma(t)}\bigl(\dt u+u\Pmat:\grad\vvec\bigr)\dd A.
  \label{eq:area-transport}
\end{align}
\end{subequations}
The first identity follows by differentiating the orthogonality of $\n$ to the coordinate tangent vectors, together with $|\n|^2=1$.

For a vector field $\wvec$ and a scalar function $\psi$, define
\begin{equation}\label{eq:B-continuous}
  \B_\Gamma(\wvec,\psi)
  \coloneqq\bigl\langle\n(\grad\psi)^T-\psi\Amat,\grad\wvec\bigr\rangle_\Gamma.
\end{equation}
The same form will appear in the curvature equation and in the first variation of the energy.

\begin{lemma}[Curvature evolution]\label{lem:curvature}
For a smooth surface evolution and every smooth scalar function $\psi$,
\begin{equation}\label{eq:curvature-weak}
  (\dt H,\psi)_\Gamma=\B_\Gamma(\vvec,\psi).
\end{equation}
\end{lemma}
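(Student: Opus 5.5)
The plan is to derive a pointwise formula for $\dt H$ and then integrate it against $\psi$. Since $H=\tr\Amat=\tr(\grad\n)=\nabla_\Gamma\cdot\n$ is the tangential divergence of the normal field, $\dt H$ follows from two ingredients: the commutator of $\dt$ with the tangential divergence, and the transport identity \eqref{eq:normal-transport} for $\dt\n$.

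\textbf{Step 1: the commutator.} Write the tangential gradient in the parametrization as $\grad\wvec=\partial_\alpha\wvec\,g^{\alpha\beta}\partial_\beta\X^T$. Differentiate in time at a fixed reference point, using $\dt\partial_\alpha\X=\partial_\alpha\vvec$ and the induced variation of the inverse metric $g^{\alpha\beta}$. This should give
\[
  \dt(\grad\wvec)=\grad(\dt\wvec)-(\grad\wvec)\bigl(\grad\vvec-\n\n^T(\grad\vvec)^T\bigr)\quad\text{(up to the row convention)},
\]
where the correction is the tangential part of $\grad\vvec$, symmetrized so that only the $\Pmat$-block survives. Taking traces yields $\dt(\nabla_\Gamma\cdot\wvec)=\nabla_\Gamma\cdot(\dt\wvec)-\grad\wvec:(\grad\vvec)^T$ plus terms that contain $\grad\wvec\,\n$. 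For $\wvec=\n$ we have $\grad\n=\Amat$ with $\Amat\n=0$ and $\Amat$ symmetric, so these extra terms vanish. Hence
\[
  \dt H=\nabla_\Gamma\cdot(\dt\n)-\Amat:\grad\vvec .
\]
Verifying this commutator carefully, with the transposes and the normal components handled consistently under the row convention, is the step I expect to be the main obstacle. As a check I would test it on a sphere dilating with $\vvec=\dot R\,\n$: there $\dt H=-(d-1)\dot R/R^2$, and $\Amat:\grad\vvec=\dot R|\Amat|^2=(d-1)\dot R/R^2$, with $\dt\n=0$.

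\textbf{Step 2: substitute $\dt\n$.} By \eqref{eq:normal-transport}, $\dt\n=-(\grad\vvec)^T\n$. Its $j$th component is $-D_j\vvec\cdot\n$, so this field is tangential because $D_j$ is tangential.

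\textbf{Step 3: integrate against $\psi$.} Multiply by $\psi$ and integrate over $\Gamma$. For the divergence term, use surface integration by parts on the closed surface, $\int_\Gamma\psi\,\nabla_\Gamma\cdot\wvec=-\int_\Gamma\grad\psi\cdot\wvec+\int_\Gamma H\psi\,\wvec\cdot\n$. The curvature term drops because $\dt\n$ is tangential. This gives
\[
  \int_\Gamma\grad\psi\cdot(\grad\vvec)^T\n\dd A=\bigl\langle\n(\grad\psi)^T,\grad\vvec\bigr\rangle_\Gamma ,
\]
since $\n^T(\grad\vvec)\grad\psi=\tr\bigl((\n(\grad\psi)^T)^T\grad\vvec\bigr)$. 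The remaining term contributes $-\langle\psi\Amat,\grad\vvec\rangle_\Gamma$. Together these are exactly $\B_\Gamma(\vvec,\psi)$ in \eqref{eq:B-continuous}, which proves \eqref{eq:curvature-weak}.
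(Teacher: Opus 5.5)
Your proposal is correct and follows essentially the paper's route. The paper also differentiates $H=g^{\alpha\beta}\partial_\alpha\n\cdot\partial_\beta\X$ (your $\nabla_\Gamma\cdot\n$) to get $\dt H=\nabla_\Gamma\cdot(\dt\n)-\Amat:\grad\vvec$, with $-2\Amat:\grad\vvec$ from the inverse metric and $+\Amat:\grad\vvec$ from $\partial_\beta\vvec$, then substitutes \eqref{eq:normal-transport} and integrates by parts.

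One small fix: in the row convention the commutator correction is $\grad\wvec\,(\grad\vvec)^T\n\n^T$ rather than $\grad\wvec\,\n\n^T(\grad\vvec)^T$. So the leftover trace term is $\n^T\grad\wvec\,(\grad\vvec)^T\n$, which involves $\n^T\grad\wvec$ rather than $\grad\wvec\,\n$. It still vanishes for $\wvec=\n$, because $\Amat$ is symmetric with $\Amat\n=0$.
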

\begin{proof}
In local coordinates, let $g_{\alpha\beta}=\partial_\alpha\X\cdot\partial_\beta\X$ and let $(g^{\alpha\beta})$ be the inverse metric. Differentiating $H=g^{\alpha\beta}\partial_\alpha\n\cdot\partial_\beta\X$, with repeated Greek indices summed over $1,\ldots,d-1$, gives
\begin{equation}\label{eq:curvature-strong}
  \dt H=-\grad\cdot\bigl((\grad\vvec)^T\n\bigr)
          -\Amat:\grad\vvec.
\end{equation}
Indeed, the derivative of the inverse metric contributes $-2\Amat:\grad\vvec$, while differentiating the last tangent vector contributes $\Amat:\grad\vvec$. The remaining term is $\grad\cdot\dt\n$. The vector $(\grad\vvec)^T\n$ is tangential. Multiplication by $\psi$ and integration by parts on the closed surface prove \eqref{eq:curvature-weak}.
\end{proof}

The identity \eqref{eq:curvature-weak} is used for Willmore flow in \cite{bao2025energy3D}. For a purely normal velocity it reduces to
\begin{equation}\label{eq:normal-curvature}
  \dt H=-\lap V_n-|\Amat|^2V_n,
  \qquad V_n=\vvec\cdot\n.
\end{equation}
The weak form \eqref{eq:curvature-weak} also allows tangential motion and requires only first spatial derivatives of its arguments.

\subsection{First variation and chemical potential}
The chain rule and \eqref{eq:transport} give
\begin{equation}\label{eq:energy-chain}
  \frac{\mathrm d W}{\mathrm dt}
  =(r,\dt H)_\Gamma
   +\bigl\langle f\Pmat-\n\avec^T,\grad\vvec\bigr\rangle_\Gamma
  =\bigl\langle\bs S,\grad\vvec\bigr\rangle_\Gamma,
\end{equation}
where $f$ and $\avec$ are evaluated at $(\n,H)$ and, by \cref{lem:curvature},
\begin{equation}\label{eq:stress}
  \bs S=f\Pmat-\n\avec^T-r\Amat+\n(\grad r)^T.
\end{equation}
Notice that $\bs S\n=0$. Thus the derivative index of the stress is tangential, although the stress itself need not be symmetric.

\begin{proposition}[First variation]\label{prop:first-variation}
The first variation of \eqref{eq:intro-energy} is
\begin{equation}\label{eq:first-variation}
  \delta W[\om]=(\mu\n,\om)_\Gamma
   =\langle\bs S,\grad\om\rangle_\Gamma,
\end{equation}
with chemical potential
\begin{equation}\label{eq:chemical-potential}
  \mu=\grad\cdot\avec-\lap f_H+fH-f_H|\Amat|^2.
\end{equation}
\end{proposition}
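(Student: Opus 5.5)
The proof has two parts. First, \eqref{eq:energy-chain} already gives $\delta W[\om]=\langle\bs S,\grad\om\rangle_\Gamma$ with $\bs S$ from \eqref{eq:stress}: we take $\vvec=\om$ as the velocity of a variation $\X+\varepsilon\om$ and evaluate at $\varepsilon=0$, using \eqref{eq:transport} and \cref{lem:curvature} with $\psi=r$. What remains is to integrate $\langle\bs S,\grad\om\rangle_\Gamma$ by parts and show that it equals $(\mu\n,\om)_\Gamma$ with $\mu$ as in \eqref{eq:chemical-potential}.

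We handle the four terms of $\bs S$ one at a time, using the surface divergence theorem on a closed surface. For a (not necessarily tangential) vector field $\bs u$ it reads $\int_\Gamma\grad\cdot\bs u\dd A=\int_\Gamma H\,\bs u\cdot\n\dd A$. Applied row by row to a matrix field $M$ with $M\n=0$, it gives
\begin{equation*}
\langle M,\grad\om\rangle_\Gamma=-\bigl(\grad\cdot M,\om\bigr)_\Gamma,
\end{equation*}
where $\grad\cdot M$ denotes the row-wise divergence. The $H$-term drops out because each row of $M$ is tangential. The four contributions are then as follows.
\begin{itemize}
\item[(i)] $f\Pmat$: we have $\grad\cdot(f\Pmat)=\grad f-fH\n$, using $\grad\cdot\Pmat=-H\n$. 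This contributes $(fH\n-\grad f,\om)_\Gamma$.
\item[(ii)] $-\n\avec^T$: here $\grad\cdot(\n\avec^T)=(\grad\cdot\avec)\n+\Amat\avec$, contributing $((\grad\cdot\avec)\n+\Amat\avec,\om)_\Gamma$.
\item[(iii)] $-r\Amat$: using the Codazzi identity $\grad\cdot\Amat=\grad H-|\Amat|^2\n$, we get $\grad\cdot(r\Amat)=\Amat\grad r+r\grad H-r|\Amat|^2\n$.
\item[(iv)] $\n(\grad r)^T$: here $\grad\cdot(\n(\grad r)^T)=(\lap r)\n+\Amat\grad r$.
\end{itemize}

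Summing, the normal components assemble exactly to $\mu\n$ with $\mu=\grad\cdot\avec-\lap f_H+fH-f_H|\Amat|^2$, since $r=f_H$. The terms $\pm\Amat\grad r$ from (iii) and (iv) cancel. The remaining tangential part is $-\grad f+\Amat\avec+r\grad H$. By the chain rule for $f(\n,H)$ with $\grad\n=\Amat$ symmetric, $\grad f=\Amat\avec+f_H\grad H$, because $\avec$ is the spherical gradient and the tangential derivative of $\n$ is $\Amat$. So the tangential part vanishes, as required by reparametrization invariance.

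The main obstacle is bookkeeping the sign and transpose conventions under the row convention $(\grad\wvec)_{ij}=D_jw_i$. In particular, we must verify $\grad\cdot\Pmat=-H\n$ and the Codazzi form of $\grad\cdot\Amat$ in this notation. The latter we would check by writing $\Amat=\grad\n$, computing $\underline D_j A_{ij}=\underline D_j\underline D_i n_j$, commuting surface derivatives via $[\underline D_i,\underline D_j]u=(\Amat\grad u)_j n_i-(\Amat\grad u)_i n_j$, and using $\grad\cdot\n=H$ and $|\n|^2=1$. A consistency check is the sphere with $f\equiv1$, where $\mu=H$ and the formula recovers mean curvature flow.
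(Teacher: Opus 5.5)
Your proof is correct, but it takes a different route from the paper.

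\textbf{The paper's route.} The paper tests only with normal variations $\om=\eta\n$. Using $\grad(\eta\n)=\n(\grad\eta)^T+\eta\Amat$ together with $\Pmat\n=0$ and $\Amat\n=0$, the four terms of $\bs S$ collapse to the scalar integrand $-\avec\cdot\grad\eta+\grad r\cdot\grad\eta+(fH-r|\Amat|^2)\eta$. One scalar integration by parts then gives $\mu$. Tangential variations are dismissed as infinitesimal reparametrizations with zero first variation.

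\textbf{Your route.} You integrate $\langle\bs S,\grad\om\rangle_\Gamma$ by parts for arbitrary $\om$ and prove the pointwise identity $-\grad\cdot\bs S=\mu\n$. I checked each step under the row convention:
\begin{itemize}
\item the four row-wise divergences;
\item the cancellation of the two $\Amat\grad r$ terms;
\item the Codazzi form $\grad\cdot\Amat=\lap\n=\grad H-|\Amat|^2\n$ via your commutator formula, where summing the commutator terms over $j$ gives $n_i|\Amat|^2$ plus a term containing $\grad|\n|^2=0$;
\item the chain rule $\grad f=\Amat\avec+f_H\grad H$, which uses the symmetry of $\Amat$.
\end{itemize}
All of these are consistent.

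\textbf{Comparison.} The paper's argument is shorter and needs no second-order geometric identity. Instead it relies on invariance of $W$ under reparametrization, together with the fact, visible in \eqref{eq:energy-chain}, that the first variation depends only on the velocity field. Your argument is more computational, since it needs Codazzi and the commutator of surface derivatives. In return it is self-contained and gives a stronger strong-form statement: the stress has purely normal divergence. The vanishing tangential part, $-\grad f+\Amat\avec+f_H\grad H=0$, is exactly the Noether identity behind the reparametrization argument. So you verify by hand what the paper invokes abstractly.
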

\begin{proof}
For a normal variation $\om=\eta\n$, we have
$\grad(\eta\n)=\n(\grad\eta)^T+\eta\Amat$. Substituting this expression in \eqref{eq:energy-chain} yields
\[
  \delta W[\eta\n]
  =\int_\Gamma\left[-\avec\cdot\grad\eta
        +\grad r\cdot\grad\eta
        +(fH-r|\Amat|^2)\eta\right]\dd A.
\]
Integration by parts gives \eqref{eq:chemical-potential}. A tangential variation is an infinitesimal reparametrization and has zero first variation. Decomposing an arbitrary $\om$ into its normal and tangential parts proves \eqref{eq:first-variation}.
\end{proof}

This is the mean-curvature-dependent case of the general first-variation formula in \cite{dogan2012first}. If $f=\gamma(\n)$, then \eqref{eq:chemical-potential} gives the anisotropic mean curvature. If $f=H^2/2$, it gives
\begin{equation}\label{eq:willmore}
  \mu=-\lap H+\tfrac12H^3-H|\Amat|^2.
\end{equation}
These two cases also fix the signs in the flow laws below.

\subsection{Weak formulations of the gradient flows}
We consider the $L^2$ gradient flow and surface diffusion ($H^{-1}$ gradient flow) of the energy \eqref{eq:intro-energy}. The normal velocity $V_n=\vvec\cdot\n$ satisfies,
\begin{equation}\label{eq:flow-laws}
  V_n=-\mu,\qquad\text{and}\qquad V_n=\lap\mu,
\end{equation}
respectively. For a nondegenerate dependence on $H$, these are fourth- and sixth-order geometric equations. A formulation with first derivatives is obtained by retaining $H$ and $r$ as separate scalar fields.

For the $L^2$ flow, we seek $(\X,\mu,H,r)$ such that
\begin{subequations}\label{eq:continuous-system}
\begin{align}
  (\vvec\cdot\n,\varphi)_\Gamma
     &=-(\mu,\varphi)_\Gamma,\label{eq:continuous-velocity}\\
  (\mu\n,\om)_\Gamma
     &=\langle f\Pmat-\n\avec^T,\grad\om\rangle_\Gamma
       +\B_\Gamma(\om,r),\label{eq:continuous-force}\\
  (\dt H,\psi)_\Gamma&=\B_\Gamma(\vvec,\psi),\label{eq:continuous-H}\\
  (r,\zeta)_\Gamma&=(f_H(\n,H),\zeta)_\Gamma,\label{eq:continuous-r}
\end{align}
\end{subequations} for all $\varphi,\psi,\zeta\in H^1(\Gamma)$ and $\om\in [H^1(\Gamma)]^d$. 

At $t=0$ we prescribe $\X_0$ and the geometric curvature $H_0=-\n_0\cdot\Delta_{\Gamma_0}\X_0$. For surface diffusion, only \eqref{eq:continuous-velocity} changes:
\begin{equation}\label{eq:continuous-diffusion}
  (\vvec\cdot\n,\varphi)_\Gamma
     =-(\grad\mu,\grad\varphi)_\Gamma.
\end{equation}

\begin{proposition}[Continuous dissipation and conservation]\label{prop:continuous-energy}
Smooth geometric solutions of \eqref{eq:continuous-system} satisfy
\begin{equation}\label{eq:continuous-l2-energy}
  \frac{\mathrm dW}{\mathrm dt}=-\|\mu\|_{L^2(\Gamma)}^2.
\end{equation}
For surface diffusion, with \eqref{eq:continuous-velocity} replaced by \eqref{eq:continuous-diffusion},
\begin{equation}\label{eq:continuous-sd-energy}
  \frac{\mathrm dW}{\mathrm dt}=-\|\grad\mu\|_{L^2(\Gamma)}^2,
  \qquad \frac{\mathrm dV}{\mathrm dt}=0,
\end{equation}
where $V$ is the enclosed area for $d=2$ and volume for $d=3$. We use the term volume for this enclosed $d$-dimensional measure in the unified notation below.
\end{proposition}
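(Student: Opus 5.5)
The plan is to test the weak system \eqref{eq:continuous-system} with the natural choices so that the force equation and the curvature equation cancel each other in the energy identity \eqref{eq:energy-chain}. No explicit formula for $\mu$ is used; only the weak equations enter.

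First I differentiate $W$ with the transport identity \eqref{eq:area-transport} and the chain rule:
\[
\frac{\mathrm dW}{\mathrm dt}=(f_H(\n,H),\dt H)_\Gamma+\bigl\langle f\Pmat-\n\avec^T,\grad\vvec\bigr\rangle_\Gamma .
\]
This uses $\dt f=\avec\cdot\dt\n+f_H\dt H$ together with \eqref{eq:normal-transport}, and $\avec\cdot(-(\grad\vvec)^T\n)=-\n\avec^T:\grad\vvec$. It is exactly the first equality in \eqref{eq:energy-chain}.

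Next I take $\zeta=\dt H$ in \eqref{eq:continuous-r}, which is admissible for smooth solutions. This replaces $(f_H,\dt H)_\Gamma$ by $(r,\dt H)_\Gamma$. Taking $\psi=r$ in \eqref{eq:continuous-H} then turns it into $\B_\Gamma(\vvec,r)$. Finally, $\om=\vvec$ in \eqref{eq:continuous-force} gives
\[
\frac{\mathrm dW}{\mathrm dt}=\langle f\Pmat-\n\avec^T,\grad\vvec\rangle_\Gamma+\B_\Gamma(\vvec,r)=(\mu\n,\vvec)_\Gamma=(\mu,\vvec\cdot\n)_\Gamma .
\]
For the $L^2$ flow, $\varphi=\mu$ in \eqref{eq:continuous-velocity} yields $-\|\mu\|^2_{L^2(\Gamma)}$. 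For diffusion, $\varphi=\mu$ in \eqref{eq:continuous-diffusion} yields $-\|\grad\mu\|^2_{L^2(\Gamma)}$.

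For volume conservation I use the Reynolds transport theorem for the enclosed domain $\Omega(t)$, whose boundary moves with normal velocity $\vvec\cdot\n$ (tangential motion does not change $\Omega$). This gives $\frac{\mathrm dV}{\mathrm dt}=(\vvec\cdot\n,1)_\Gamma$. Taking $\varphi=1$ in \eqref{eq:continuous-diffusion} gives $-(\grad\mu,\grad 1)_\Gamma=0$.

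The only delicate point is the step where the test functions are chosen. The functions $\vvec$, $\mu$, $r$ and $\dt H$ must lie in the test spaces $H^1(\Gamma)$ and $[H^1(\Gamma)]^d$ on the current surface. This holds because smooth solutions are assumed. One should also note that $\dt H$ is a function on $\Gamma(t)$ via the parametrization, so it is a legitimate test function for \eqref{eq:continuous-r}. Everything else is bookkeeping.
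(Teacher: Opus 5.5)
Your proposal is correct and matches the paper's proof: the same test choices $\zeta=\dt H$, $\psi=r$, $\om=\vvec$ reduce \eqref{eq:energy-chain} to $(\mu\n,\vvec)_\Gamma$, and then $\varphi=\mu$ and $\varphi=1$ in the velocity equation give the energy identities and volume conservation. You simply spell out in more detail the chain-rule derivation of \eqref{eq:energy-chain} and the transport formula for the enclosed volume.
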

\begin{proof}
Take $\om=\vvec$, $\psi=r$, and $\zeta=\dt H$ in the last three equations of \eqref{eq:continuous-system}. Equation \eqref{eq:energy-chain} then gives $\mathrm dW/\mathrm dt=(\mu\n,\vvec)_\Gamma$. Testing the appropriate velocity equation with $\varphi=\mu$ proves the energy identities. For surface diffusion, the test $\varphi=1$ gives $(\vvec\cdot\n,1)_\Gamma=0$, which is the volume transport formula.
\end{proof}

The role of the last equation in \eqref{eq:continuous-system} is simple at this level: it states $r=f_H$. Its separate weak form becomes useful after discretization, because the normal-dependent expression $f_H$ need not be a continuous finite element function.

\section{The stabilizing function and the local energy estimate}\label{sec:matrices}

At fixed curvature, $f(\n,H)$ is a normal-dependent surface energy. We apply the symmetric surface energy matrix to this density, retaining the curvature as a parameter. The stabilizing function is therefore a function of both arguments of $f$.

\subsection{Conditions on the density and the surface energy matrix}
Let $I\subseteq\R$ be an interval containing the curvature values under consideration. The following conditions will be used in the stability analysis.

\begin{assumption}[Energy density]\label{ass:density}
For every $s\in I$, the function $f(\cdot,s)$ is positive and belongs to $C^2(\sphere^{d-1})$. Moreover,
\begin{equation}\label{eq:direction-condition}
  (5-d)f(\n,s)-f(-\n,s)\geq0,
  \qquad (\n,s)\in\sphere^{d-1}\times I,\qquad d=2,3.
\end{equation}
For each $\n\in\sphere^{d-1}$, the function $s\mapsto f(\n,s)$ is continuously differentiable and convex on $I$.
\end{assumption}

The directional condition is automatic for a positive even density. It neither requires evenness nor imposes convexity on the one-homogeneous extension in the normal variable. The curvature dependence need not separate from the directional dependence. Only positivity, directional regularity, and \eqref{eq:direction-condition} are needed for the local estimate in this section; convexity in $s$ enters the time-discrete energy proof.

For fixed $s$, define the one-homogeneous extension and the Cahn--Hoffman vector by
\begin{equation}\label{eq:homogeneous}
  F(\bs p,s)=|\bs p|f\left(\frac{\bs p}{|\bs p|},s\right),\qquad
  \bs\xi(\n,s)=\nabla_{\bs p}F(\n,s)=f(\n,s)\n+\avec(\n,s).
\end{equation}
Given a nonnegative stabilizing function $k\colon\sphere^{d-1}\times I\to\R$, set
\begin{equation}\label{eq:Zk}
  \Z_k(\n,s)=f(\n,s)I_d-\n\bs\xi(\n,s)^T
             -\bs\xi(\n,s)\n^T+k(\n,s)\n\n^T.
\end{equation}
This is the symmetric branch of the surface energy matrices in \cite{bao2026alpha,liZhangSurface3D}; see also \cite{bao2023symmetrized3D,li2025optimal}. Since $\Pmat\n=0$,
\begin{equation}\label{eq:matrix-consistency}
  \Z_k(\n,s)\Pmat=f(\n,s)\Pmat-\n\avec(\n,s)^T.
\end{equation}
Consequently, \eqref{eq:continuous-force} can be written as
\begin{equation}\label{eq:matrix-weak-force}
  (\mu\n,\om)_\Gamma
    =\langle\Z_k(\n,H)\grad\X,\grad\om\rangle_\Gamma
      +\B_\Gamma(\om,r).
\end{equation}
The stabilizing term leaves this continuous identity unchanged. Its size matters in the discrete estimate, where the normals and measures of the old and new elements differ.

\subsection{The local energy estimate}

\begin{lemma}[Local energy estimate]\label{lem:local-energy}
Fix $s\in I$. Suppose that $f(\cdot,s)>0$, $f(\cdot,s)\in C^2(\sphere^{d-1})$, and \eqref{eq:direction-condition} holds. There is a finite nonnegative threshold $k_0(\n,s)$ such that the following estimate holds whenever $k(\n,s)\geq k_0(\n,s)$.

Let $\sigma$ be an oriented nondegenerate $(d-1)$-simplex of a polygonal curve or polyhedral surface $\Gamma$, and let $\X$ map $\sigma$ affinely onto another such simplex $\bar\sigma$, preserving the ordering of corresponding vertices. Denote their induced unit normals by $\n$ and $\bar\n$, and let $\id$ be the identity map. With the elementwise surface gradient defined in \eqref{eq:discrete-gradient-2D}--\eqref{eq:vector-discrete-gradient}, we have
\begin{equation}\label{eq:local-estimate}
  |\sigma|\bigl(\Z_k(\n,s)\grad\X|_\sigma\bigr):
       \bigl(\grad\X|_\sigma-\grad\id|_\sigma\bigr)
  \geq |\bar\sigma|f(\bar\n,s)-|\sigma|f(\n,s).
\end{equation}
\end{lemma}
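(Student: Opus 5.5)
Since $s$ is fixed throughout, the estimate is a purely anisotropic statement about the density $\gamma(\n)\coloneqq f(\n,s)$. The plan is to reduce it to the known local energy estimate for the symmetric surface energy matrix \cite{bao2026alpha,liZhangSurface3D,li2025optimal}, applied with $\gamma$ in place of the anisotropy. What must be checked is that the hypotheses used there hold uniformly for this $\gamma$. Those hypotheses are positivity, $C^2$ regularity on $\sphere^{d-1}$, and the directional condition \eqref{eq:direction-condition} with $(5-d)\gamma(\n)-\gamma(-\n)\ge0$. This also explains why $k_0$ depends on $s$ only as a parameter.

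First, I will rewrite the left-hand side of \eqref{eq:local-estimate} elementwise. On $\sigma$, $\grad\id|_\sigma=\Pmat$. Moreover, $\grad\X|_\sigma$ is the constant linear map taking the tangent space of $\sigma$ to that of $\bar\sigma$. Its Jacobian ratio is $|\bar\sigma|/|\sigma|$, and its cofactor relation gives $\bar\n|\bar\sigma|$ in terms of $\n|\sigma|$ and $\grad\X|_\sigma$. For $d=2$ this means that the rotated edge vectors satisfy $\bar\n|\bar\sigma|=-J\,\bar{\bs h}$ with $\bar{\bs h}=(\grad\X)\bs h$. For $d=3$ it means that $\bar\n|\bar\sigma|$ is half the cross product of the two transformed edges.

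Using \eqref{eq:Zk} and $\Pmat\n=0$, I will expand $\Z_k\grad\X:(\grad\X-\Pmat)$. The expansion splits into four kinds of terms: $\gamma|\grad\X|^2-\gamma\,\Pmat:\grad\X$, the cross terms involving $\bs\xi$, and $k|(\grad\X)^T\n|^2$. The last term is nonnegative. It measures the normal component of the transformed tangent vectors, which is exactly where $\bar\n$ differs from $\n$.

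The core step is a pointwise inequality in the variables $\n$ and $\bar\n$ and in the stretch. The convexity-type inequality $|\bar\sigma|\le\tfrac12(|\sigma|^{-1}|\grad\X|^2|\sigma|+|\sigma|)$, i.e.\ $ab\le(a^2+b^2)/2$, controls the area growth. The remaining difference $|\bar\sigma|\bigl(\gamma(\bar\n)-\gamma(\n)-\bs\xi(\n)\cdot(\bar\n-\n)\bigr)$ is then controlled in two regimes.

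When $\bar\n$ is near $\n$, I will use the $C^2$ bound on $\gamma$. A Taylor remainder of size $C|\bar\n-\n|^2$ is absorbed by $k|(\grad\X)^T\n|^2$ once $k$ is large. When $\bar\n$ is far from $\n$, and in particular near $-\n$, a Taylor bound is useless. There the directional condition \eqref{eq:direction-condition} is needed: it guarantees that even a reversed element has energy at most $(5-d)\gamma(\n)$ relative to the available quadratic terms. The constants $5-d$ (that is, $3$ in 2D and $2$ in 3D) match the sharp coefficients obtained in \cite{li2025optimal,liZhangSurface3D}.

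I expect the main obstacle to be showing that a single finite threshold $k_0(\n,s)$ works over the whole noncompact family of affine maps. I would first normalize by homogeneity, reducing to a minimization over $\bar\n\in\sphere^{d-1}$ of a quantity of the form $\sup_{\bar\n}\bigl[\text{deficit}\bigr]/|\bar\n-\n|^2$. I would then show this supremum is finite. Near $\bar\n=\n$ this follows from $C^2$ regularity, and at $\bar\n=-\n$ it follows from \eqref{eq:direction-condition}. Compactness of the sphere gives the required finiteness, and $k_0$ is defined as this supremum.

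Rather than redoing this optimization, I would cite the proof in \cite{bao2026alpha,liZhangSurface3D}. That proof uses only these three properties of $\gamma$, so it yields \eqref{eq:local-estimate} verbatim with $\gamma=f(\cdot,s)$.
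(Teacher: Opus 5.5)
Your proposal follows the paper's proof: with $s$ frozen, you treat $f(\cdot,s)$ as an anisotropy that is positive, $C^2$ on $\sphere^{d-1}$ and satisfies \eqref{eq:direction-condition}, and you quote the local energy estimate for the symmetric surface energy matrix from the planar and three-dimensional literature, exactly as the paper does. The heuristic optimization you sketch before deferring to those results is not accurate as written, but your argument rests on the citation rather than on that sketch, so it is complete.

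The problem with the sketch is that the term $k|(\grad\X)^T\n|^2$ vanishes at $\bar\n=-\n$ as well as at $\bar\n=\n$. A threshold defined as a supremum of a deficit over $|\bar\n-\n|^2$ therefore does not control the regime near $\bar\n=-\n$. At $\bar\n=-\n$, the constant $5-d$ comes instead from the terms $f(\n,s)\bigl(|\grad\X|^2-\Pmat:\grad\X\bigr)$ after optimizing over the stretch.
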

\begin{proof}
For fixed $s$, set $\gamma_s(\n)=f(\n,s)$ and apply the local energy estimate for the symmetric surface energy matrix. For $d=2$, this is the estimate in \cite{li2025optimal}; see also the branch $\alpha=-1$ in \cite{bao2026alpha}. For $d=3$, the proof is the same as that of \cite[Lemma~8.2]{zhangThesis}, using the directional stabilization result of \cite[Theorem~5.1]{liZhangSurface3D}. The stated assumptions give the required conditions on $\gamma_s$, and \eqref{eq:Zk} is its symmetric surface energy matrix.
\end{proof}

The threshold may depend on both the normal and the curvature. Throughout the paper, we choose
\begin{equation}\label{eq:admissible-k}
  k(\n,s)\geq k_0(\n,s),\qquad (\n,s)\in\sphere^{d-1}\times I.
\end{equation}
The lemma gives a finite threshold at each fixed $s$; it does not require a uniform bound over all $s\in\R$. In the discrete scheme, this condition will be imposed at the new curvature values. Thus the local estimate follows from the conditions on $f$ and the choice of $k$, rather than being assumed separately.

\begin{remark}[Densities that vanish]\label{rem:zero-density}
The positivity in \cref{lem:local-energy} is needed to apply the directional stabilization result. A density that vanishes at some curvature values can still be treated if its local estimate is checked there. For example, if $f(\n,s)=\Phi(s)\geq0$ is independent of $\n$, choosing $k(\n,s)=2\Phi(s)$ gives $\Z_k=\Phi(s)I_d$. The usual segment length or triangle area inequality gives \eqref{eq:local-estimate}, including $\Phi(s)=0$. The energy proof below then applies whenever $\Phi$ is differentiable and convex. In particular, the Willmore density $\Phi(s)=s^2/2$ is included.
\end{remark}

\section{A parametric finite element discretization}\label{sec:discrete}

\subsection{Discrete surfaces and quadrature}
Let $0=t_0<t_1<\cdots$ with $\tau_m=t_{m+1}-t_m$. We approximate $\Gamma(t_m)$ by a closed polygonal curve or triangulated surface
\begin{equation}\label{eq:triangulation}
  \Gamma^m=\bigcup_{j=1}^J\overline{\sigma_j^m},\qquad
  \sigma_j^m=[\q_{j_1}^m,\cdots,\q_{j_d}^m].
\end{equation}
The elements are nondegenerate segments for $d=2$ and triangles for $d=3$,
with vertex orderings chosen consistently with the outward orientation.
Their associated normal vectors are
\begin{equation}\label{eq:face-normal}
  \mathcal J\{\sigma_j\}\coloneqq
  \begin{cases}
    (\q_{j_2}-\q_{j_1})^\perp, & d=2,\\
    (\q_{j_2}-\q_{j_1})\times(\q_{j_3}-\q_{j_1}), & d=3,
  \end{cases}
\end{equation}
where $(v_1,v_2)^\perp=(v_2,-v_1)$ is the clockwise rotation in two
dimensions. The element measure and unit normal are
\begin{equation}
  |\sigma_j|=\frac{|\mathcal J\{\sigma_j\}|}{d-1},\qquad
  \n_j=\frac{\mathcal J\{\sigma_j\}}{|\mathcal J\{\sigma_j\}|}.
\end{equation}

The continuous piecewise linear space is
\begin{equation}\label{eq:FE-space}
  \K^m=\K(\Gamma^m)\coloneqq\left\{u\in C(\Gamma^m):u|_{\sigma_j^m}\in\mathcal P^1(\sigma_j^m),
           \ 1\leq j\leq J\right\},
\end{equation} where $\mathcal P^1(\sigma_j^m)$ is the space of linear polynomials on $\sigma_j^m$. 

For a segment $\sigma=[\q_1,\q_2]$ in 2D, the discrete surface gradient is defined by
\begin{equation}\label{eq:discrete-gradient-2D}
  \left.\nabla_{\Gamma^m}u\right|_\sigma
  \coloneqq \left(u(\q_2)-u(\q_1)\right)\frac{\q_2-\q_1}{|\q_2-\q_1|^2},\qquad u\in\mathcal P^1(\sigma).
\end{equation}
For a triangle $\sigma=[\q_1,\q_2,\q_3]$ and $u\in\mathcal P^1(\sigma)$ in three dimensions, it is
\begin{equation}\label{eq:discrete-gradient}
  \left.\nabla_{\Gamma^m}u\right|_\sigma
  \coloneqq u(\q_1)\frac{(\q_2-\q_3)\times\n}{|\mathcal J\{\sigma\}|}
    +u(\q_2)\frac{(\q_3-\q_1)\times\n}{|\mathcal J\{\sigma\}|}
    +u(\q_3)\frac{(\q_1-\q_2)\times\n}{|\mathcal J\{\sigma\}|},
\end{equation}
where $\n=\mathcal J\{\sigma\}/|\mathcal J\{\sigma\}|$ is the unit normal to $\sigma$. For a vector-valued function $\wvec=(w_1,w_2,\cdots,w_d)^T$, its discrete surface Jacobian is
\begin{equation}\label{eq:vector-discrete-gradient}
  \left.\nabla_{\Gamma^m}\wvec\right|_\sigma
  \coloneqq
  \begin{bmatrix}
      (\nabla_{\Gamma^m}w_1)^T\\
      (\nabla_{\Gamma^m}w_2)^T\\
      \vdots\\
      (\nabla_{\Gamma^m}w_d)^T
     \end{bmatrix},
  \qquad \forall\,\wvec\in[\mathcal P^1(\sigma)]^d.
\end{equation}
Both are constant on each simplex, and $\nabla_{\Gamma^m}\id=I_d-\n^m\otimes\n^m$.

We regard $\X^m=\id$ as a map on $\Gamma^m$, while $\X^{m+1}\in[\K^m]^d$ maps $\q_i^m$ to $\q_i^{m+1}$. We denote
\begin{equation}\label{eq:increments}
  \dX=\X^{m+1}-\X^m,\qquad \dH=H^{m+1}-H^m.
\end{equation}
The scalar unknowns at level $m+1$ are first represented on $\Gamma^m$. After the step, their nodal coefficients are carried to the corresponding vertices of $\Gamma^{m+1}$. The notation $H_i^{m+1}$ denotes this same coefficient on either surface.

For piecewise continuous functions with well-defined vertex traces, we use the mass-lumped inner product, as in \cite{liZhangSurface3D}:
\begin{equation}\label{eq:lumping}
  (u,v)_{\Gamma^m}^h
  \coloneqq\frac1d\sum_{j=1}^J\sum_{\ell=1}^d|\sigma_j^m|
     u((\q_{j_\ell}^m)^-)v((\q_{j_\ell}^m)^-).
\end{equation}
Here the value at each vertex is taken from the interior of the element:
\[
  u((\q_{j_\ell}^m)^-)
    \coloneqq\lim_{\substack{\q\to\q_{j_\ell}^m\\\q\in\sigma_j^m}}u(\q)
    \eqqcolon u_{j\ell}.
\]
We write $\|u\|_{m,h}^2=(u,u)_{\Gamma^m}^h$. For vector-valued functions, the product in \eqref{eq:lumping} is replaced by the Euclidean dot product. For matrix-valued functions, we set
\[
  \langle\bs U,\bs V\rangle_{\Gamma^m}^h
  \coloneqq\frac1d\sum_{j=1}^J\sum_{\ell=1}^d|\sigma_j^m|
       \bs U((\q_{j_\ell}^m)^-):\bs V((\q_{j_\ell}^m)^-).
\]
Each element therefore contributes the diagonal nodal mass matrix
$\tfrac1d|\sigma_j^m|I_d$. All inner products in the discrete schemes use this vertex quadrature. For elementwise constant integrands, including products of gradients of piecewise linear functions, it agrees with exact integration.

All nonlinear coefficients are evaluated at these element vertex traces. For example,
\[
  f(\n^m,H^{m+1})_{j\ell}=f(\n_j^m,H_{j_\ell}^{m+1}).
\]
The curvature coefficient is shared by adjacent elements, but their normals are different. Consequently, this expression generally has more than one trace at a vertex.

\subsection{A discrete curvature evolution form}
We approximate the Weingarten matrix from a continuous recovered normal. Let $\mathcal T_i^m$ be the set of indices of elements incident to vertex $i$. At each vertex, set
\begin{equation}\label{eq:vertex-normal}
  \N_i^m=
    \frac{\displaystyle\sum_{j\in\mathcal T_i^m}|\sigma_j^m|\n_j^m}
         {\displaystyle\left|\sum_{j\in\mathcal T_i^m}|\sigma_j^m|\n_j^m\right|},
\end{equation}
assuming the denominator is nonzero, and interpolate these values in $[\K^m]^d$. On each element define
\begin{equation}\label{eq:discrete-Weingarten}
  \Amat^m=\Pmat^m\sym(\nabla_{\Gamma^m}\N^m)\Pmat^m,
  \qquad \sym U=\tfrac12(U+U^T).
\end{equation}
This gives a symmetric, tangential, piecewise constant matrix. The recovered normal is used only in this construction; the density and energy matrix use the face normal \eqref{eq:face-normal}.

The discrete counterpart of \eqref{eq:B-continuous} is
\begin{equation}\label{eq:B-discrete}
  \B_m(\wvec,\psi)
   =\bigl\langle\n^m(\nabla_{\Gamma^m}\psi)^T-\psi\Amat^m,
                      \nabla_{\Gamma^m}\wvec\bigr\rangle_{\Gamma^m}^h.
\end{equation}
Other approximations of $\Amat$ can be used in this form. The energy proof requires that the same $\B_m$ appear in the force and curvature equations. The choice \eqref{eq:discrete-Weingarten} makes the method fully specified.

Choose a stabilizing function $k$ satisfying \eqref{eq:admissible-k}. For a fixed $H\in\K^m$ with nodal values in $I$, define
\begin{equation}\label{eq:D-discrete}
  \D_m^k(\Y,\om;H)
   =\bigl\langle\Z_k(\n^m,H)\nabla_{\Gamma^m}\Y,\nabla_{\Gamma^m}\om\bigr\rangle_{\Gamma^m}^h.
\end{equation}
This form is bilinear in $\Y$ and $\om$ at fixed $H$. Its coefficient includes $k(\n^m,H)$, so the stabilizing function retains its curvature dependence.

\subsection{The fully discrete \texorpdfstring{$L^2$}{L2} flow}
Given $\Gamma^m$ and $H^m\in\K^m$, find
\[
  (\X^{m+1},\mu^{m+1},H^{m+1},r^{m+1})
       \in[\K^m]^d\times(\K^m)^3
\]
such that, for every $\varphi,\psi,\zeta\in\K^m$ and $\om\in[\K^m]^d$,
\begin{subequations}\label{eq:scheme}
\begin{align}
  \left(\frac{\dX}{\tau_m}\cdot\n^m,\varphi\right)_{\Gamma^m}^h
     &=-(\mu^{m+1},\varphi)_{\Gamma^m}^h,\label{eq:scheme-velocity}\\
  (\mu^{m+1}\n^m,\om)_{\Gamma^m}^h
     &=\D_m^k(\X^{m+1},\om;H^{m+1})
        +\B_m(\om,r^{m+1}),\label{eq:scheme-force}\\
  (\dH,\psi)_{\Gamma^m}^h
     &=\B_m(\dX,\psi),\label{eq:scheme-H}\\
  (r^{m+1},\zeta)_{\Gamma^m}^h
     &=(f_H(\n^m,H^{m+1}),\zeta)_{\Gamma^m}^h.\label{eq:scheme-r}
\end{align}
\end{subequations}
Equation \eqref{eq:scheme-H} is written for increments, so it contains no additional factor $\tau_m$. The old geometry is used in $\B_m$, whereas the new curvature enters both $\Z_k$ and $f_H$, including the value of $k$. The resulting system is nonlinear even though all four finite element fields are piecewise linear.

In particular, the stabilizing function is evaluated at the same element vertex traces as the density:
\begin{equation}\label{eq:nodal-k}
  k(\n_j^m,H_{j_\ell}^{m+1})
       \geq k_0(\n_j^m,H_{j_\ell}^{m+1}),
  \qquad 1\leq j\leq J,\quad 1\leq\ell\leq d.
\end{equation}
The function may be chosen differently at different time steps, provided this bound holds. Freezing it at the old curvature is justified only if the resulting value also bounds the threshold at the new curvature.

The last equation is a mass-lumped projection. Let
\begin{equation}\label{eq:nodal-mass}
  M_i^m=\frac1d\sum_{j\in\mathcal T_i^m}|\sigma_j^m|>0.
\end{equation}
Testing \eqref{eq:scheme-r} with the $i$th nodal basis function gives
\begin{equation}\label{eq:r-nodal}
  r_i^{m+1}=\frac{1}{dM_i^m}
       \sum_{j\in\mathcal T_i^m}|\sigma_j^m|
                f_H(\n_j^m,H_i^{m+1}).
\end{equation}
Hence $r^{m+1}$ can be eliminated by a nodal calculation for a general density $f$.

Keeping the projection in the variational statement is nevertheless useful. It ensures that $r^{m+1}$ is an admissible test function in \eqref{eq:scheme-H} and gives the exact identity
\begin{equation}\label{eq:projection-pairing}
  (r^{m+1},\dH)_{\Gamma^m}^h
     =(f_H(\n^m,H^{m+1}),\dH)_{\Gamma^m}^h.
\end{equation}
There is therefore no need to differentiate the generally discontinuous expression $f_H(\n^m,H^{m+1})$.

\section{Discrete energy dissipation}\label{sec:energy-proof}

The energy of the discrete surface and its curvature field is
\begin{equation}\label{eq:discrete-energy}
  W^m=\frac1d\sum_{j=1}^J|\sigma_j^m|
                       \sum_{\ell=1}^df(\n_j^m,H_{j_\ell}^m).
\end{equation}
Both the surface measure and the density are evaluated on the current surface. The quadrature is part of the definition of $W^m$.

\begin{theorem}[Energy dissipation for the $L^2$ flow]\label{thm:L2-energy}
Let $f$ satisfy \cref{ass:density}, and choose the stabilizing function so that \eqref{eq:nodal-k} holds. If \eqref{eq:scheme} has a solution with nondegenerate old and new elements and with $H_i^m,H_i^{m+1}\in I$, then
\begin{equation}\label{eq:L2-energy}
  W^{m+1}+\tau_m\|\mu^{m+1}\|_{m,h}^2\leq W^m.
\end{equation}
This estimate holds for every $\tau_m>0$.
\end{theorem}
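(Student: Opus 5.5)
Test the scheme with the discrete analogues of the choices in the proof of \cref{prop:continuous-energy}: $\varphi=\mu^{m+1}$ in \eqref{eq:scheme-velocity}, $\om=\dX$ in \eqref{eq:scheme-force}, $\psi=r^{m+1}$ in \eqref{eq:scheme-H}, and use the projection identity \eqref{eq:projection-pairing}. These tests are admissible because all four fields lie in $\K^m$.

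\textbf{Step 1: algebraic identity.} Because mass lumping acts nodally and $\mu^{m+1}\in\K^m$, the first two tests give
\[
-\tau_m\|\mu^{m+1}\|_{m,h}^2=(\mu^{m+1}\n^m,\dX)_{\Gamma^m}^h
=\D_m^k(\X^{m+1},\dX;H^{m+1})+\B_m(\dX,r^{m+1}).
\]
Testing \eqref{eq:scheme-H} with $\psi=r^{m+1}$ and then applying \eqref{eq:projection-pairing} yields
\[
\B_m(\dX,r^{m+1})=(r^{m+1},\dH)_{\Gamma^m}^h=(f_H(\n^m,H^{m+1}),\dH)_{\Gamma^m}^h .
\]
Thus the same form $\B_m$ appears in the force and curvature equations and cancels exactly. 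It then remains to show
\[
\D_m^k(\X^{m+1},\dX;H^{m+1})+(f_H(\n^m,H^{m+1}),\dH)_{\Gamma^m}^h\ \ge\ W^{m+1}-W^m .
\]

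\textbf{Step 2: geometric part, elementwise at frozen curvature.} Introduce the intermediate energy
\[
\widetilde W=\frac1d\sum_j|\sigma_j^m|\sum_\ell f(\n_j^m,H^{m+1}_{j_\ell}),
\]
which uses the old geometry and the new curvature. On each element, $\nabla_{\Gamma^m}\X^{m+1}$ and $\nabla_{\Gamma^m}\dX=\nabla_{\Gamma^m}\X^{m+1}-\nabla_{\Gamma^m}\id$ are constant. Hence the vertex contribution of $\D_m^k$ at vertex $\ell$ of element $j$ is
\[
\tfrac1d|\sigma_j^m|\,\Z_k(\n_j^m,H_{j_\ell}^{m+1})\nabla\X^{m+1}:(\nabla\X^{m+1}-\nabla\id).
\]
Apply \cref{lem:local-energy} with $s=H^{m+1}_{j_\ell}\in I$. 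The assumption \eqref{eq:nodal-k} ensures $k\ge k_0$ at precisely this $(\n,s)$, and the nondegeneracy hypothesis together with vertex ordering ensures that $\X^{m+1}$ maps $\sigma_j^m$ affinely onto $\sigma_j^{m+1}$. Each vertex term is therefore bounded below by
\[
\tfrac1d\bigl(|\sigma_j^{m+1}|f(\n_j^{m+1},s)-|\sigma_j^m|f(\n_j^m,s)\bigr).
\]
Summing over all vertices and elements, and recalling that the nodal coefficients $H_i^{m+1}$ are carried over to $\Gamma^{m+1}$, gives
\[
\D_m^k(\X^{m+1},\dX;H^{m+1})\ge W^{m+1}-\widetilde W .
\]

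\textbf{Step 3: curvature part via convexity.} For each fixed $\n$, convexity of $s\mapsto f(\n,s)$ on $I$ gives
\[
f(\n,H^{m+1})-f(\n,H^m)\le f_H(\n,H^{m+1})(H^{m+1}-H^m),
\]
since both values lie in $I$. Applying this at each trace $(\n_j^m,H_{j_\ell})$ and summing with the lumped weights yields
\[
\widetilde W-W^m\le (f_H(\n^m,H^{m+1}),\dH)_{\Gamma^m}^h .
\]
Adding the bounds from Steps 2 and 3 and substituting into Step 1 proves \eqref{eq:L2-energy}. No restriction on $\tau_m$ enters at any point.

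\textbf{Main obstacle.} The delicate point is Step 2. The local estimate must be applied vertex by vertex with a curvature value that varies across the element, and the matrix $\Z_k$ is evaluated at the new curvature. This works only because the lumped quadrature treats each vertex trace as a separate constant-$s$ problem and $\nabla\X^{m+1}$ is elementwise constant. One must also verify that the splitting through $\widetilde W$ pairs the old normals with the new curvature consistently in both the geometric step and the convexity step.
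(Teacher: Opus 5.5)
Your proposal is correct and follows the paper's proof essentially step for step. You use the same test functions to obtain the balance \eqref{eq:discrete-balance}, the same intermediate energy (your $\widetilde W$ is the paper's $\widehat W^{m+1|m}$), the local energy estimate applied at each element vertex trace with $s=H^{m+1}_{j_\ell}$, and convexity in $H$ together with the projection identity \eqref{eq:projection-pairing}; the only cosmetic difference is that you apply \eqref{eq:projection-pairing} before the convexity step rather than after.
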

\begin{proof}
Choose $\varphi=\mu^{m+1}$ in \eqref{eq:scheme-velocity}, $\om=\dX$ in \eqref{eq:scheme-force}, and $\psi=r^{m+1}$ in \eqref{eq:scheme-H}. The resulting identities give
\begin{equation}\label{eq:discrete-balance}
  \D_m^k(\X^{m+1},\dX;H^{m+1})+(\dH,r^{m+1})_{\Gamma^m}^h
       =-\tau_m\|\mu^{m+1}\|_{m,h}^2.
\end{equation}

To estimate its left-hand side, introduce the energy with new curvature coefficients on the old geometry,
\begin{equation}\label{eq:intermediate-energy}
  \widehat W^{m+1|m}
      =\frac1d\sum_{j=1}^J|\sigma_j^m|
                         \sum_{\ell=1}^df(\n_j^m,H_{j_\ell}^{m+1}).
\end{equation}
By \eqref{eq:nodal-k}, \cref{lem:local-energy} applies to the map $\X^{m+1}|_{\sigma_j^m}$ with $s=H_{j_\ell}^{m+1}$. Divide each inequality by $d$ and sum over $j$ and $\ell$. Since $\nabla_{\Gamma^m}\X^m=\Pmat^m$, this yields
\begin{equation}\label{eq:geometry-bound}
  \D_m^k(\X^{m+1},\dX;H^{m+1})
        \geq W^{m+1}-\widehat W^{m+1|m}.
\end{equation}

Convexity in the second argument gives, for any $a,b\in I$,
\begin{equation}\label{eq:convexity}
  f_H(\n,b)(b-a)\geq f(\n,b)-f(\n,a).
\end{equation}
At each old element vertex, take $a=H_{j_\ell}^m$ and $b=H_{j_\ell}^{m+1}$. After summation and use of \eqref{eq:projection-pairing}, we obtain
\begin{equation}\label{eq:curvature-bound}
  (\dH,r^{m+1})_{\Gamma^m}^h
      \geq\widehat W^{m+1|m}-W^m.
\end{equation}
Adding \eqref{eq:geometry-bound} and \eqref{eq:curvature-bound} cancels the intermediate energy. Substitution in \eqref{eq:discrete-balance} proves \eqref{eq:L2-energy}.
\end{proof}

The argument separates the two changes in the density without separating its two arguments. In particular, it does not require $f-Hf_H$ to be nonnegative. The local matrix estimate controls the normal and area, while the implicit curvature derivative controls the change of $H$.

\begin{corollary}[Accumulated dissipation]\label{cor:cumulative}
Under the hypotheses of \cref{thm:L2-energy}, a sequence of discrete solutions satisfies
\begin{equation}\label{eq:cumulative-energy}
  W^M+\sum_{m=0}^{M-1}\tau_m\|\mu^{m+1}\|_{m,h}^2\leq W^0.
\end{equation}
\end{corollary}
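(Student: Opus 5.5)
The corollary follows by telescoping the one-step estimate of \cref{thm:L2-energy}. I read ``a sequence of discrete solutions'' as meaning that, for each $m=0,\dots,M-1$, the pair $(\Gamma^{m+1},H^{m+1})$ is produced from $(\Gamma^m,H^m)$ by a solution of \eqref{eq:scheme}. Each such solution is assumed to meet the hypotheses of the theorem: old and new elements are nondegenerate, the nodal curvatures $H_i^m,H_i^{m+1}$ lie in $I$, and the stabilizing function satisfies \eqref{eq:nodal-k} at that step. Under these conditions the theorem gives, for every $m$,
\[
  W^{m+1}+\tau_m\|\mu^{m+1}\|_{m,h}^2\leq W^m .
\]

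One point needs checking before summing: the quantity $W^{m+1}$ obtained at step $m$ must be the same quantity that appears as the ``old'' energy at step $m+1$. By \eqref{eq:discrete-energy}, $W^{m+1}$ is computed from the elements $\sigma_j^{m+1}$, their normals $\n_j^{m+1}$, and the nodal coefficients $H_{j_\ell}^{m+1}$. The paper explicitly carries these nodal coefficients from the vertices of $\Gamma^m$ to the corresponding vertices of $\Gamma^{m+1}$, and they then serve as the given data $H^{m+1}\in\K^{m+1}$ for the next step. Hence the new energy at step $m$ and the old energy at step $m+1$ are literally the same number. I expect this identification to be the only step requiring care.

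I then sum the one-step inequalities over $m=0,\dots,M-1$. The energies telescope, leaving
\[
  W^M+\sum_{m=0}^{M-1}\tau_m\|\mu^{m+1}\|_{m,h}^2\leq W^0,
\]
which is \eqref{eq:cumulative-energy}. Because \cref{thm:L2-energy} holds for every $\tau_m>0$, no restriction on the step sizes enters, and the stabilizing function may be chosen differently at each step. It is also worth noting that each term $\tau_m\|\mu^{m+1}\|_{m,h}^2$ is nonnegative, since the lumped norm is a positive weighted sum of squares. Together with the positivity of $f$, this gives the monotonicity $0<W^M\leq W^0$ as a by-product.
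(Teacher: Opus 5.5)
Your proposal is correct and matches the paper's proof, which likewise sums the one-step estimate \eqref{eq:L2-energy} over $m=0,\ldots,M-1$ and telescopes. Your check that $W^{m+1}$ from step $m$ is the same quantity as the old energy at step $m+1$ is a sound, if routine, addition.
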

\begin{proof}
Sum \eqref{eq:L2-energy} over $m=0,\ldots,M-1$.
\end{proof}

The same proof applies to a density covered by \cref{rem:zero-density}, or to a matrix for which the local estimate has been verified directly.

\begin{remark}\label{rem:scope}
The absence of a time-step restriction in \eqref{eq:L2-energy} concerns the energy estimate. The theorem assumes a solution of the nonlinear system and nondegenerate elements. It does not establish solvability, prevent mesh degeneration, or estimate the difference between the evolved field $H^m$ and the geometric curvature of an underlying smooth surface.
\end{remark}

\section{Area- and volume-preserving flows}\label{sec:volume}

To preserve the enclosed area or volume, we use the time-averaged
normal introduced by Bao and Zhao \cite{bao2021structure}.

\subsection{The discrete volume increment}
On each element $\sigma_j^m$, the piecewise constant vector
$\n^{m+\frac12}$ is defined by
\begin{equation}\label{eq:average-normal}
  \n^{m+\frac12}\big|_{\sigma_j^m}
  \coloneqq\frac{\mathcal J\{\sigma_j^m\}
             +4\mathcal J\{\sigma_j^{m+\frac12}\}
             +\mathcal J\{\sigma_j^{m+1}\}}
         {6|\mathcal J\{\sigma_j^m\}|},
\end{equation}
where
\[
  \sigma_j^{m+\frac12}
    \coloneqq\left[
       \frac{\q_{j_1}^m+\q_{j_1}^{m+1}}2,
       \ldots,
       \frac{\q_{j_d}^m+\q_{j_d}^{m+1}}2
     \right]
\]
is the midpoint element and $\mathcal J\{\sigma\}$ is the orientation
vector defined in \eqref{eq:face-normal}.

The oriented enclosed measure is
\begin{equation}\label{eq:volume}
  V^m=\frac1d(\id,\n^m)_{\Gamma^m}^h
    =\frac{1}{d^2}\sum_{j=1}^J|\sigma_j^m|
                 \sum_{\ell=1}^d\q_{j_\ell}^m\cdot\n_j^m.
\end{equation}
For an embedded mesh with outward orientation, this is its enclosed area
in two dimensions and volume in three dimensions, by the divergence theorem.

\begin{lemma}[Exact volume increment \cite{bao2021structure}]\label{lem:volume}
Closed, consistently oriented polygonal curves or triangular surfaces with the same connectivity satisfy
\begin{equation}\label{eq:volume-increment}
  V^{m+1}-V^m=(\dX,\n^{m+\frac12})_{\Gamma^m}^h.
\end{equation}
\end{lemma}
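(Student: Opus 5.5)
The plan is to reduce the identity to an exact element-by-element computation. On each element $\sigma_j^m$ I consider the linear interpolation of vertices $\q_{j_\ell}(\lambda)=\q_{j_\ell}^m+\lambda(\q_{j_\ell}^{m+1}-\q_{j_\ell}^m)$, $\lambda\in[0,1]$. This gives a family of closed, consistently oriented meshes $\Gamma(\lambda)$ with fixed connectivity. The enclosed measure $V(\lambda)$, defined by \eqref{eq:volume}, can be written as
\[
V(\lambda)=\frac1d\sum_j \bar{\q}_j(\lambda)\cdot\frac{\mathcal J\{\sigma_j(\lambda)\}}{d-1},
\]
since $|\sigma_j|\n_j=\mathcal J\{\sigma_j\}/(d-1)$ and the lumped average of the vertices is the barycenter $\bar{\q}_j$. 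Because $\mathcal J$ is a polynomial of degree $d-1$ in the vertices, $V(\lambda)$ is a polynomial of degree $d$ in $\lambda$.

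The first step is to compute $V'(\lambda)$. I will show that
\[
V'(\lambda)=\sum_j\frac{1}{d-1}\,\frac1d\sum_\ell \delta\q_{j_\ell}\cdot\mathcal J\{\sigma_j(\lambda)\},
\]
that is, $V'(\lambda)=\sum_j (\overline{\delta\q}_j)\cdot|\sigma_j(\lambda)|\n_j(\lambda)$. This is the discrete Reynolds-type formula: the derivative of the enclosed measure equals the flux of the velocity through the faces. The cleanest route is to use that $V$ is the signed measure enclosed by the mesh, which equals a sum of signed simplex volumes $\frac{1}{d!}\det[\q_{j_1},\dots,\q_{j_d}]$ with respect to the origin. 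Differentiating the multilinear determinant in each column gives, for each vertex, $\delta\q_i\cdot\sum_{j\ni i}\partial_{\q_i}\det$. The cofactor sums telescope over the closed mesh to $\frac1{d-1}\sum_{j\ni i}\mathcal J\{\sigma_j\}/d$, which is the lumped-mass normal weight.

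This closedness and cancellation step is the main obstacle. For $d=3$, I would verify it by writing $\det[\q_1,\q_2,\q_3]=\q_1\cdot(\q_2\times\q_3)$. The derivative then contains the terms $\delta\q_1\cdot(\q_2\times\q_3)$ and similar ones. Comparing with $\delta\q_1\cdot\frac12(\q_2-\q_1)\times(\q_3-\q_1)$, the difference consists of terms such as $\delta\q_1\cdot(\q_1\times\cdot)$, which cancel across the edges shared by neighbouring triangles with opposite orientation. For $d=2$, the analogous edge check is immediate.

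Finally, integrate $V'(\lambda)$ over $[0,1]$. The integrand $\mathcal J\{\sigma_j(\lambda)\}$ is a polynomial of degree $d-1\le2$ in $\lambda$. Simpson's rule is therefore exact:
\[
\int_0^1\mathcal J\{\sigma_j(\lambda)\}\dd\lambda=\frac16\bigl(\mathcal J\{\sigma_j^m\}+4\mathcal J\{\sigma_j^{m+\frac12}\}+\mathcal J\{\sigma_j^{m+1}\}\bigr).
\]
Dividing by $|\mathcal J\{\sigma_j^m\}|$ and multiplying by $|\sigma_j^m|=|\mathcal J\{\sigma_j^m\}|/(d-1)$ recovers \eqref{eq:average-normal}. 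Since $\n^{m+\frac12}$ is constant on each element, the sum $\sum_j|\sigma_j^m|\,\overline{\delta\q}_j\cdot\n^{m+\frac12}|_{\sigma_j}$ is exactly the lumped product $(\dX,\n^{m+\frac12})_{\Gamma^m}^h$. This gives \eqref{eq:volume-increment}.
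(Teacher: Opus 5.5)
Your argument is correct. The paper gives no proof of its own and cites Bao and Zhao, whose argument has the same skeleton as yours: interpolate the vertices linearly, write $\mathrm dV/\mathrm d\lambda$ as the face flux $\frac{1}{d(d-1)}\sum_j\sum_\ell\delta\q_{j_\ell}\cdot\mathcal J\{\sigma_j(\lambda)\}$, and integrate the degree-$(d-1)$ polynomial $\mathcal J\{\sigma_j(\lambda)\}$ exactly by Simpson's rule. Where you differ is the flux step. The cited argument, as I recall it, takes this step from a transport theorem for the region enclosed by $\Gamma(\lambda)$. You instead derive it algebraically from $V=\frac{1}{d!}\sum_j\det[\q_{j_1},\dots,\q_{j_d}]$ and cancellation across oppositely oriented shared edges.

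Your route proves the lemma as stated, for any closed consistently oriented mesh. It does not need the intermediate meshes to be embedded or nondegenerate. To secure this, define $V(\lambda)$ directly by the determinant (equivalently $\mathcal J$) formula rather than by \eqref{eq:volume}, because $\n_j(\lambda)$ is undefined if an intermediate element collapses.

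One constant in your $d=3$ check needs correcting. Both $V'(\lambda)$ and the target carry the prefactor $\tfrac16$, so you must compare $\delta\q_1\cdot(\q_2\times\q_3)$ with $\delta\q_1\cdot(\q_2-\q_1)\times(\q_3-\q_1)$, not with half of it. Only then is the difference $\delta\q_1\cdot\bigl(\q_1\times(\q_2-\q_3)\bigr)$ plus its cyclic permutations. These terms regroup as $(\delta\q_a+\delta\q_b)\cdot(\q_a\times\q_b)$ over the directed edges $a\to b$ of each element, and they cancel against the oppositely oriented copy in the neighbouring element.
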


\subsection{Surface diffusion}
We retain the curvature and projection equations \eqref{eq:scheme-H} and \eqref{eq:scheme-r} and replace the first two equations of \eqref{eq:scheme} by
\begin{subequations}\label{eq:sd-scheme}
\begin{align}
  \left(\frac{\dX}{\tau_m}\cdot\n^{m+\frac12},\varphi\right)_{\Gamma^m}^h
      &=-(\nabla_{\Gamma^m}\mu^{m+1},\nabla_{\Gamma^m}\varphi)_{\Gamma^m}^h,
        \label{eq:sd-velocity}\\
  (\mu^{m+1}\n^{m+\frac12},\om)_{\Gamma^m}^h
      &=\D_m^k(\X^{m+1},\om;H^{m+1})
         +\B_m(\om,r^{m+1}).\label{eq:sd-force}
\end{align}
\end{subequations}
The gradient product in \eqref{eq:sd-velocity} uses the same mass-lumped inner product. Since both gradients are constant on each element, this quadrature is exact. The same time-averaged normal is used in both equations. The two occurrences of $\B_m$ still use the old geometry.

\begin{theorem}[Volume conservation and energy dissipation]\label{thm:sd}
Let $f$ satisfy \cref{ass:density}, and choose $k$ according to \eqref{eq:nodal-k}. Then every solution of \eqref{eq:sd-scheme}, \eqref{eq:scheme-H}, and \eqref{eq:scheme-r} with nondegenerate old and new elements and nodal curvature values in $I$ satisfies
\begin{equation}\label{eq:sd-structure}
  V^{m+1}=V^m,\qquad
  W^{m+1}+\tau_m\|\nabla_{\Gamma^m}\mu^{m+1}\|_{L^2(\Gamma^m)}^2\leq W^m.
\end{equation}
\end{theorem}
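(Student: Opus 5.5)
The argument will mirror the proof of \cref{thm:L2-energy}, with the time-averaged normal $\n^{m+\frac12}$ doing double duty: it links the velocity and force equations, and it gives the exact volume increment of \cref{lem:volume}. The two assertions in \eqref{eq:sd-structure} can be proved separately.

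\emph{Volume conservation.} I will test \eqref{eq:sd-velocity} with $\varphi=1\in\K^m$. Since $\nabla_{\Gamma^m}1=0$, the right-hand side vanishes, and we get $(\dX\cdot\n^{m+\frac12},1)_{\Gamma^m}^h=0$. The mass-lumped product of a vector field with $\n^{m+\frac12}$ against the constant $1$ is exactly the vector pairing $(\dX,\n^{m+\frac12})_{\Gamma^m}^h$ in \eqref{eq:volume-increment}. Both use the vertex traces of $\dX$ and the elementwise constant $\n^{m+\frac12}$ with weights $|\sigma_j^m|/d$. Hence \cref{lem:volume} gives $V^{m+1}=V^m$. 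Its hypotheses hold because $\Gamma^{m+1}$ has the same connectivity and orientation as $\Gamma^m$, with nondegenerate elements by assumption. Note that $\tau_m$ plays no role here.

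\emph{Energy dissipation.} I will test \eqref{eq:sd-velocity} with $\varphi=\mu^{m+1}$, \eqref{eq:sd-force} with $\om=\dX$, and \eqref{eq:scheme-H} with $\psi=r^{m+1}$. The left-hand sides $(\dX\cdot\n^{m+\frac12},\mu^{m+1})^h_{\Gamma^m}$ and $(\mu^{m+1}\n^{m+\frac12},\dX)^h_{\Gamma^m}$ agree because both are nodal quadratures of the same scalar product at element vertex traces. Eliminating the common term $\B_m(\dX,r^{m+1})$ gives
\[
  \D_m^k(\X^{m+1},\dX;H^{m+1})+(\dH,r^{m+1})_{\Gamma^m}^h
  =-\tau_m\|\nabla_{\Gamma^m}\mu^{m+1}\|_{L^2(\Gamma^m)}^2,
\]
where exactness of the quadrature for piecewise constant gradients turns the lumped norm into the $L^2$ norm. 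From here the proof of \cref{thm:L2-energy} transfers verbatim, since neither step involves the velocity law. First, \cref{lem:local-energy}, applicable elementwise at $s=H^{m+1}_{j_\ell}$ by \eqref{eq:nodal-k}, gives \eqref{eq:geometry-bound}. Second, convexity together with \eqref{eq:projection-pairing} gives \eqref{eq:curvature-bound}. Adding these cancels $\widehat W^{m+1|m}$ and yields the stated inequality for every $\tau_m>0$.

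The only step needing care is the matching of the two pairings involving $\n^{m+\frac12}$. This requires the same normal and the same quadrature in \eqref{eq:sd-velocity} and \eqref{eq:sd-force}, and that $\B_m$ is the identical form in \eqref{eq:sd-force} and \eqref{eq:scheme-H}. Both hold by construction of the scheme. I do not expect a genuine obstacle beyond checking that the vertex-trace convention of \eqref{eq:lumping} makes the scalar and vector pairings coincide.
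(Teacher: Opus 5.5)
Your proposal is correct and follows the paper's proof essentially step for step. For volume conservation you test \eqref{eq:sd-velocity} with $\varphi=1$ and apply \cref{lem:volume}; for the energy inequality you use the tests $\varphi=\mu^{m+1}$, $\om=\dX$, $\psi=r^{m+1}$ to reach \eqref{eq:sd-balance} and then bound its left-hand side by \eqref{eq:geometry-bound} and \eqref{eq:curvature-bound}. Your added checks make explicit two points the paper leaves implicit: that the scalar and vector lumped pairings with $\n^{m+\frac12}$ coincide, and that the lumped gradient product equals the exact $L^2$ norm.
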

\begin{proof}
Take $\varphi=1$ in \eqref{eq:sd-velocity}. Its right-hand side vanishes, so \cref{lem:volume} gives volume conservation. Next choose $\varphi=\mu^{m+1}$ in \eqref{eq:sd-velocity}, $\om=\dX$ in \eqref{eq:sd-force}, and $\psi=r^{m+1}$ in \eqref{eq:scheme-H}. We obtain
\begin{equation}\label{eq:sd-balance}
  \D_m^k(\X^{m+1},\dX;H^{m+1})+(\dH,r^{m+1})_{\Gamma^m}^h
    =-\tau_m\|\nabla_{\Gamma^m}\mu^{m+1}\|_{L^2(\Gamma^m)}^2.
\end{equation}
The left-hand side is bounded below by $W^{m+1}-W^m$ by \eqref{eq:geometry-bound} and \eqref{eq:curvature-bound}. This proves the energy inequality.
\end{proof}

The use of $\n^{m+\frac12}$ in the force equation preserves the pairing in \eqref{eq:sd-balance}. Using it only in the velocity equation would retain the volume identity but would no longer give this energy proof.

\subsection{The volume-constrained \texorpdfstring{$L^2$}{L2} flow}
The continuous volume-constrained flow has the velocity law
\begin{equation}\label{eq:constrained-continuous}
  V_n=-(\mu-\lambda),\qquad
  \lambda=\frac{(\mu,1)_\Gamma}{(1,1)_\Gamma}.
\end{equation}
It satisfies $\mathrm dV/\mathrm dt=0$ and
$\mathrm dW/\mathrm dt=-\|\mu-\lambda\|_{L^2(\Gamma)}^2$. Its discrete version uses \eqref{eq:sd-force}, \eqref{eq:scheme-H}, and \eqref{eq:scheme-r}, together with
\begin{equation}\label{eq:constrained-velocity}
  \left(\frac{\dX}{\tau_m}\cdot\n^{m+\frac12},\varphi\right)_{\Gamma^m}^h
      =-(\mu^{m+1}-\lambda^{m+1},\varphi)_{\Gamma^m}^h,
  \qquad
  \lambda^{m+1}=\frac{(\mu^{m+1},1)_{\Gamma^m}^h}{(1,1)_{\Gamma^m}^h}.
\end{equation}

\begin{corollary}\label{cor:constrained}
Under the assumptions of \cref{thm:sd}, the volume-constrained scheme satisfies
\begin{equation}\label{eq:constrained-structure}
  V^{m+1}=V^m,\qquad
  W^{m+1}+\tau_m\|\mu^{m+1}-\lambda^{m+1}\|_{m,h}^2\leq W^m.
\end{equation}
\end{corollary}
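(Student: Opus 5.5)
The proof follows that of \cref{thm:sd}. Only the velocity equation changes, and the constraint multiplier is built so that both the volume test and the energy test close.

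\emph{Volume conservation.} Take $\varphi=1$ in \eqref{eq:constrained-velocity}. The right-hand side becomes
\[
  -(\mu^{m+1},1)_{\Gamma^m}^h+\lambda^{m+1}(1,1)_{\Gamma^m}^h ,
\]
which vanishes by the definition of $\lambda^{m+1}$. Since $\tau_m>0$, this gives $(\dX,\n^{m+\frac12})_{\Gamma^m}^h=0$, and \cref{lem:volume} then yields $V^{m+1}=V^m$.

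\emph{Energy inequality.} Take $\varphi=\mu^{m+1}-\lambda^{m+1}$ in \eqref{eq:constrained-velocity}; this is admissible because constants lie in $\K^m$. Multiplying by $\tau_m$ gives
\[
  (\dX\cdot\n^{m+\frac12},\mu^{m+1}-\lambda^{m+1})_{\Gamma^m}^h
  =-\tau_m\|\mu^{m+1}-\lambda^{m+1}\|_{m,h}^2 .
\]
The volume step already showed $(\dX\cdot\n^{m+\frac12},1)_{\Gamma^m}^h=0$, so the $\lambda^{m+1}$ term on the left drops out. The left-hand side therefore equals $(\mu^{m+1}\n^{m+\frac12},\dX)_{\Gamma^m}^h$. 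Note that the lumped products of $\dX\cdot\n^{m+\frac12}$ with a scalar and of $\mu\n^{m+\frac12}$ with $\dX$ agree vertexwise, since $\n^{m+\frac12}$ is elementwise constant.

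Next, test \eqref{eq:sd-force} with $\om=\dX$ and \eqref{eq:scheme-H} with $\psi=r^{m+1}$. The $\B_m(\dX,r^{m+1})$ terms coincide, which yields
\[
  \D_m^k(\X^{m+1},\dX;H^{m+1})+(\dH,r^{m+1})_{\Gamma^m}^h=-\tau_m\|\mu^{m+1}-\lambda^{m+1}\|_{m,h}^2 .
\]
This has exactly the form of \eqref{eq:sd-balance}. The lower bounds \eqref{eq:geometry-bound} and \eqref{eq:curvature-bound} bound its left-hand side from below by $W^{m+1}-W^m$; they depend only on \eqref{eq:nodal-k}, \cref{ass:density} and the projection identity \eqref{eq:projection-pairing}, not on the velocity law. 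This proves \eqref{eq:constrained-structure}.

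The only point needing care is removing the multiplier from the energy pairing. This works because the same time-averaged normal appears in both the velocity and force equations, so the discrete volume identity from $\varphi=1$ cancels the $\lambda^{m+1}$ contribution exactly. No other step poses difficulty.
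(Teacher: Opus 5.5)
Your proof is correct and follows the paper's argument. The only difference is cosmetic: the paper tests \eqref{eq:constrained-velocity} with $\varphi=\mu^{m+1}$ and removes the multiplier on the right-hand side via $(\mu^{m+1}-\lambda^{m+1},\mu^{m+1})_{\Gamma^m}^h=\|\mu^{m+1}-\lambda^{m+1}\|_{m,h}^2$, whereas you test with $\mu^{m+1}-\lambda^{m+1}$ and remove it on the left via the $\varphi=1$ volume identity; both rest on the same orthogonality to constants. (Strictly, the cancellation of $\lambda^{m+1}$ uses only the velocity equation; the shared normal $\n^{m+\frac12}$ is what links the velocity pairing to the force equation.)
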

\begin{proof}
The test $\varphi=1$ gives volume conservation. For the energy estimate, use $\varphi=\mu^{m+1}$ and the identity
\[
  (\mu^{m+1}-\lambda^{m+1},\mu^{m+1})_{\Gamma^m}^h
      =\|\mu^{m+1}-\lambda^{m+1}\|_{m,h}^2.
\]
The remaining tests and estimates are those in the proof of \cref{thm:sd}.
\end{proof}

\section{Numerical results}\label{sec:numerical}

All energy densities used below satisfy \cref{ass:density}.

\subsection{Error and convergence rate}\label{sec:convergence}

We test convergence using smooth densities that couple the normal and
curvature. In all experiments, the initial nodal curvature is obtained
by evaluating the analytical curvature of the prescribed smooth curve
or surface at the mesh vertices.

Let $\Gamma_1,\Gamma_2$ be two polygonal curves or polyhedral surfaces, and let $\Omega_1,\Omega_2$ be their interiors, respectively. We measure the difference between them by the manifold distance
\begin{equation}\label{eq:curve-manifold-distance}
  M(\Gamma_1,\Gamma_2)\coloneqq
  \bigl|\Omega_1\mathbin{\triangle}\Omega_2\bigr|=2|\Omega_1\cup\Omega_2|-|\Omega_1|-|\Omega_2|,
\end{equation}
i.e. the area or volume of the symmetric difference of the two enclosed regions. For planar curves, the numerical error $e^h(t)$ is then defined by \begin{equation}
  e^h(t)\coloneqq M\bigl(\Gamma_{h,\tau}(t),\Gamma(t)\bigr).
\end{equation}

\subsubsection{Convergence for planar curves}
We first test the method for planar curves on an ellipse with a density that couples the normal direction and the curvature. For the curves, we approximate the exact solution $\Gamma(t)$ by the numerical reference $\Gamma_{h_e,\tau_e}(t)$ with $h_e=2^{-10}$ and $\tau_e=1.28h_e^2$. 

Let $\kappa$ denote the curvature. We take
\begin{equation}\label{eq:curve-coupled-density}
  f(\n,\kappa)
   =\sqrt{\n^T\bs G_0\n}
     +0.1\sqrt{\n^T\bs G_1\n}\,\kappa^2
     +\frac{0.025}{4}\kappa^4,
  \qquad \n\in\sphere^1,
\end{equation}
where
\[
  \bs G_0=\begin{pmatrix}1.4&0\\0&1\end{pmatrix},
  \qquad
  \bs G_1=\begin{pmatrix}1.8&0.25\\0.25&0.8\end{pmatrix}.
\]
The initial curve is parametrized by
\begin{equation}\label{eq:curve-initial-ellipse}
  \X_0(\rho)=\bigl(1.5\cos(2\pi\rho),\,\sin(2\pi\rho)\bigr)^T,
  \qquad 0\leq\rho\leq1.
\end{equation}
We sample the initial position at $\rho_i=i/N$, $0\leq i<N$. Thus $h=1/N$ is the parameter mesh size, and the largest initial edge length is $O(h)$. Both the $L^2$ flow and curve diffusion are evaluated at $t=0.1,0.2,0.3$ with $\tau=1.28h^2$. We take $h_0=2^{-5}$ and $\tau_0=1.28h_0^2$.

\begin{table}[!htbp]
\centering\small
\caption{Error and convergence rates for the $L^2$ flow of planar curves.}\label{tab:curve-coupled-l2}
\begin{tabular}{ccccccc}
\toprule
$(h,\tau)$ & $e^h(t=0.1)$ & order &  $e^h(t=0.2)$ & order &  $e^h(t=0.3)$ & order \\
\midrule
$(h_0,\tau_0)$ & 3.32e-2 & -- & 3.55e-2 & -- & 3.70e-2 & -- \\
$(\frac{h_0}{2},\frac{\tau_0}{4})$  & 8.30e-3 & 2.001 & 8.88e-3 & 1.999 & 9.27e-3 & 1.998 \\
$(\frac{h_0}{2^2},\frac{\tau_0}{4^2})$ &  2.05e-3 & 2.016 & 2.20e-3 & 2.016 & 2.29e-3 & 2.015 \\
$(\frac{h_0}{2^3},\frac{\tau_0}{4^3})$ & 4.89e-4 & 2.070 & 5.23e-4 & 2.070 & 5.46e-4 & 2.070 \\
\bottomrule
\end{tabular}
\end{table}

\begin{table}[!htbp]
\centering\small
\caption{Error and convergence rates for curve diffusion.}\label{tab:curve-coupled-diffusion}
\begin{tabular}{ccccccc}
\toprule
$(h,\tau)$ & $e^h(t=0.1)$ & order &  $e^h(t=0.2)$ & order &  $e^h(t=0.3)$ & order \\
\midrule
$(h_0,\tau_0)$ & 3.03e-2 & -- & 3.04e-2 & -- & 3.03e-2 & -- \\
$(\frac{h_0}{2},\frac{\tau_0}{4})$  & 7.57e-3 & 2.001 & 7.59e-3 & 1.999 & 7.59e-3 & 1.999 \\
$(\frac{h_0}{2^2},\frac{\tau_0}{4^2})$ &  1.88e-3 & 2.013 & 1.89e-3 & 2.010 & 1.88e-3 & 2.010 \\
$(\frac{h_0}{2^3},\frac{\tau_0}{4^3})$ & 4.49e-4 & 2.062 & 4.53e-4 & 2.056 & 4.53e-4 & 2.055 \\
\bottomrule
\end{tabular}
\end{table}

Tables~\ref{tab:curve-coupled-l2} and~\ref{tab:curve-coupled-diffusion} display the errors and convergence rates for the $L^2$ flow and curve diffusion, respectively. The results exhibit second-order convergence in the manifold distance.

\subsubsection{Convergence for surfaces}
For surfaces, we apply the following coupled energy density
\begin{equation}\label{eq:surface-coupled-density}
  f(\n,H)
   =\sqrt{\n^T\bs G_0\n}
     +\frac{0.05}{2}\sqrt{\n^T\bs G_1\n}\,H^2
     +\frac{0.002}{4}H^4,
  \qquad \n\in\sphere^2,
\end{equation}
where
\[
  \bs G_0=\operatorname{diag}(1,1.3,1.6),\qquad
  \bs G_1=\begin{pmatrix}
    1.5&0.15&0\\
    0.15&1&0.1\\
    0&0.1&0.8
  \end{pmatrix}.
\]
The initial surface is the ellipsoid
\begin{equation}\label{eq:surface-initial-ellipsoid}
  \frac{x_1^2}{1.4^2}+x_2^2+\frac{x_3^2}{0.8^2}=1.
\end{equation}
We subdivide an icosahedron and project its vertices onto the ellipsoid, giving meshes with $N=42,162,642,2562$ vertices. Let $h_\ell=h_0/2^\ell$, with $h_0=1/2$ and $\ell=0,1,2,3$, denote the subdivision parameter. We take $\tau_\ell=Ch_\ell^2$ for $C=0.08$ and $0.04$, and compare the solutions at $t=0.1,0.2,0.3$.

As in the curve experiments, we measure geometric error by the manifold distance. For surfaces this is the volume of the symmetric difference of the enclosed regions:
\begin{equation}\label{eq:surface-manifold-distance}
  e^{h_\ell}(t)
  \coloneqq M\bigl(\Gamma_{h_\ell,\tau_\ell}(t),
           \Gamma_{h_{\ell+1},\tau_{\ell+1}}(t)\bigr)
  =\bigl|\Omega_{h_\ell,\tau_\ell}(t)\mathbin{\triangle}
         \Omega_{h_{\ell+1},\tau_{\ell+1}}(t)\bigr|,
  \qquad \ell=0,1,2.
\end{equation}
The finest mesh is used to form the last successive-mesh distance. In the tables, $h$ is the subdivision parameter, while the order is calculated from the largest initial edge length $h_{\max,\ell}$:
\begin{equation}\label{eq:surface-convergence-order}
  p_{h_{\max}}
  =\frac{\log\bigl(e^{h_{\ell-1}}(t)/e^{h_\ell}(t)\bigr)}
         {\log\bigl(h_{\max,\ell-1}/h_{\max,\ell}\bigr)},
  \qquad \ell=1,2.
\end{equation}
All rates are computed from unrounded distances. The nonlinear tolerance is $10^{-11}$, and the curvature is evolved as an independent unknown throughout.


\begin{table}[!htbp]
\centering\small
\renewcommand{\arraystretch}{1.35}
\setlength{\tabcolsep}{5pt}
\caption{Error and convergence rates for the $L^2$ flow of surfaces.}\label{tab:surface-coupled-l2}
\begin{tabular}{cccccccc}
\toprule
$C$ & $(h,\tau)$ & $e^h(t=0.1)$ & order & $e^h(t=0.2)$ & order & $e^h(t=0.3)$ & order \\
\midrule
\multirow{3}{*}{0.08} & $(h_0,\tau_0)$ & 2.58e-1 & -- & 2.69e-2 & -- & 1.20e-2 & -- \\
 & $(\frac{h_0}{2},\frac{\tau_0}{4})$ & 7.38e-2 & 1.946 & 6.70e-3 & 2.161 & 3.65e-3 & 1.848 \\
 & $(\frac{h_0}{2^2},\frac{\tau_0}{4^2})$ & 1.90e-2 & 1.994 & 1.59e-3 & 2.119 & 9.70e-4 & 1.951 \\
\midrule
\multirow{3}{*}{0.04} & $(h_0,\tau_0)$ & 3.16e-1 & -- & 8.40e-2 & -- & 1.15e-2 & -- \\
 & $(\frac{h_0}{2},\frac{\tau_0}{4})$ & 8.98e-2 & 1.955 & 2.90e-2 & 1.655 & 3.16e-3 & 2.003 \\
 & $(\frac{h_0}{2^2},\frac{\tau_0}{4^2})$ & 2.31e-2 & 1.997 & 7.80e-3 & 1.931 & 8.21e-4 & 1.982 \\
\bottomrule
\end{tabular}
\end{table}

\begin{table}[!htbp]
\centering\small
\renewcommand{\arraystretch}{1.35}
\setlength{\tabcolsep}{5pt}
\caption{Error and convergence rates for surface diffusion.}\label{tab:surface-coupled-diffusion}
\begin{tabular}{cccccccc}
\toprule
$C$ & $(h,\tau)$ & $e^h(t=0.1)$ & order & $e^h(t=0.2)$ & order & $e^h(t=0.3)$ & order \\
\midrule
\multirow{3}{*}{0.08} & $(h_0,\tau_0)$ & 4.57e-1 & -- & 4.35e-1 & -- & 4.35e-1 & -- \\
 & $(\frac{h_0}{2},\frac{\tau_0}{4})$ & 1.23e-1 & 2.037 & 1.18e-1 & 2.024 & 1.18e-1 & 2.023 \\
 & $(\frac{h_0}{2^2},\frac{\tau_0}{4^2})$ & 3.17e-2 & 1.999 & 3.04e-2 & 2.002 & 3.04e-2 & 2.003 \\
\midrule
\multirow{3}{*}{0.04} & $(h_0,\tau_0)$ & 4.37e-1 & -- & 4.35e-1 & -- & 4.35e-1 & -- \\
 & $(\frac{h_0}{2},\frac{\tau_0}{4})$ & 1.19e-1 & 2.024 & 1.18e-1 & 2.023 & 1.18e-1 & 2.023 \\
 & $(\frac{h_0}{2^2},\frac{\tau_0}{4^2})$ & 3.06e-2 & 1.997 & 3.03e-2 & 2.003 & 3.03e-2 & 2.003 \\
\bottomrule
\end{tabular}
\end{table}

Tables~\ref{tab:surface-coupled-l2} and~\ref{tab:surface-coupled-diffusion}
exhibit second-order convergence in the manifold distance on the finer
meshes. The $L^2$
rates vary more with the observation time and $C$, whereas the fine-mesh
surface-diffusion rates remain between $1.997$ and $2.003$.

\subsection{Volume and energy change}\label{sec:energy-volume}

To verify the structure-preserving properties of the schemes, we introduce the normalized volume loss and the normalized energy as follows: 
\begin{equation}\label{eq:normalized-volume-energy}
  \frac{\Delta V^h(t)}{V^h(0)}\Big|_{t=t_m}\coloneqq\frac{V^m-V^0}{V^0},\qquad
  \frac{W^h(t)}{W^h(0)}\Big|_{t=t_m}\coloneqq\frac{W^m}{W^0}.
\end{equation}

We use the ellipsoid \eqref{eq:surface-initial-ellipsoid} and the coupled
energy \eqref{eq:surface-coupled-density}, with $\tau=0.08h^2$ for
$h=2^{-1},2^{-2},2^{-3}$. The volume histories and normalized energy are
shown in \cref{fig:dVcnt,fig:energy}.

\begin{figure}[htbp]
  \centering
  \includegraphics[width=0.5\textwidth]{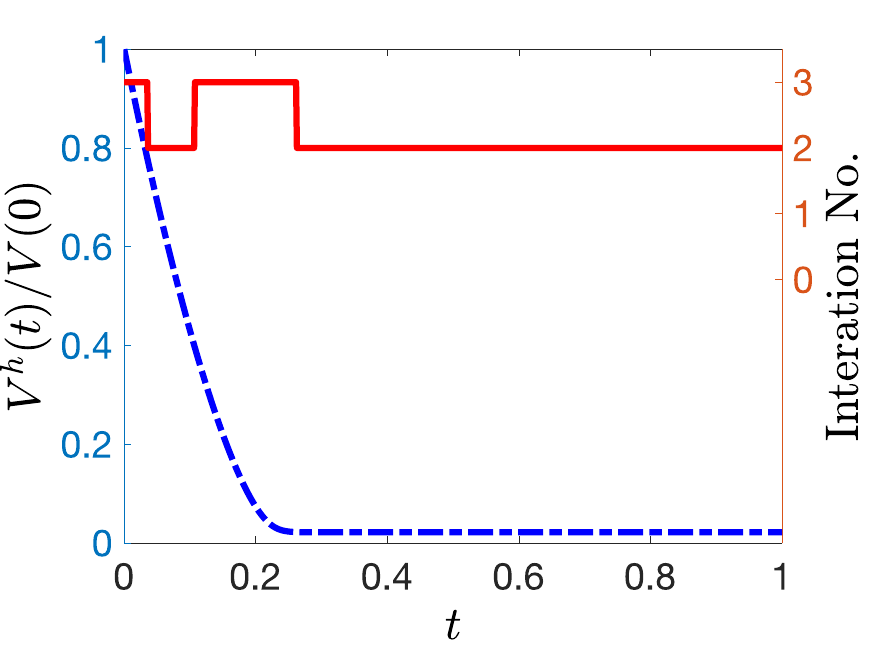}\includegraphics[width=0.5\textwidth]{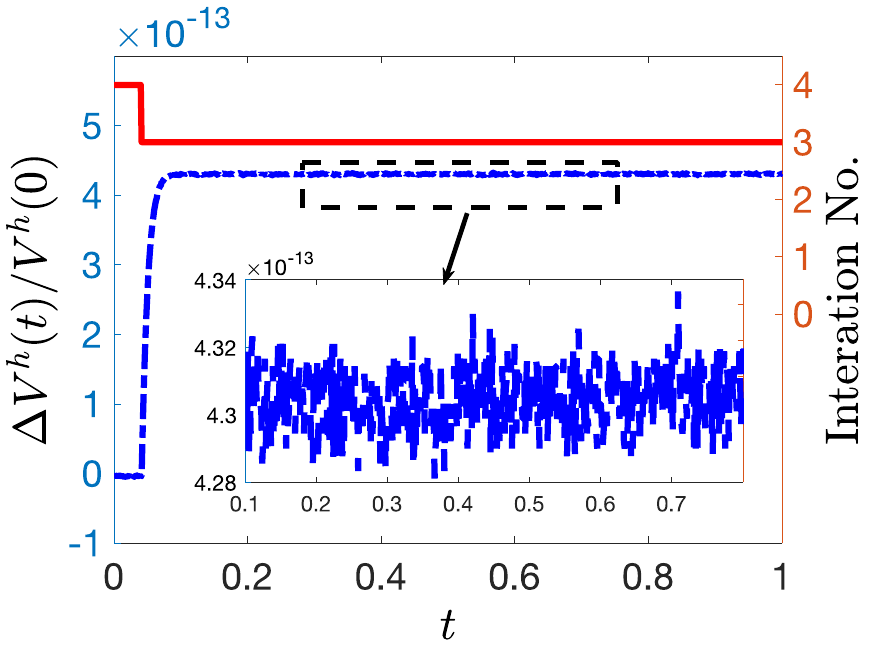}
  \caption{Volume histories and Newton iteration counts with $h=2^{-3}$:
  (left) $V^h(t)/V^h(0)$ for the $L^2$ flow;
  (right) $\Delta V^h(t)/V^h(0)$ for surface diffusion.}
  \label{fig:dVcnt}
\end{figure}

\begin{figure}[htbp]
  \centering
  \includegraphics[width=0.5\textwidth]{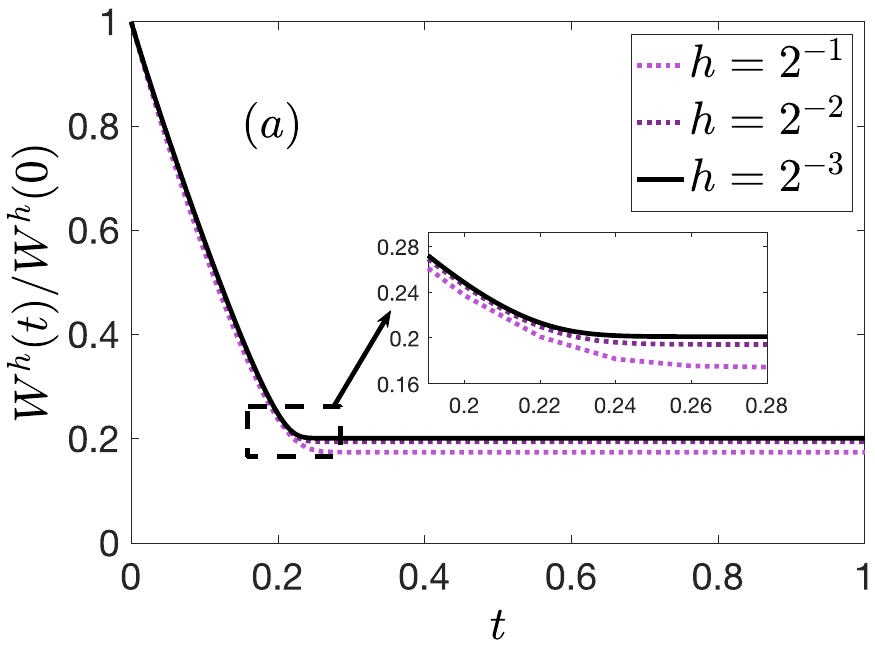}\includegraphics[width=0.5\textwidth]{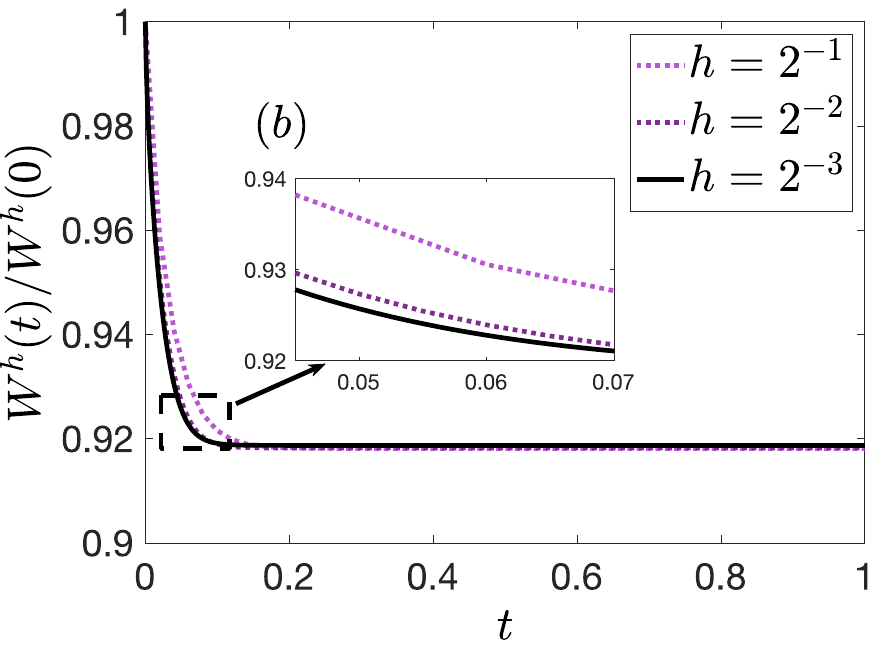}
  \caption{Normalized energy for: (a) $L^2$ flow; (b) surface diffusion.}
  \label{fig:energy}
\end{figure}

It can be observed that the normalized energy decreases monotonically for both the $L^2$ flow and surface diffusion. For surface diffusion, the volume is well preserved, with the volume loss controlled at the order of $10^{-13}$, close to machine precision. The number of Newton iterations is also plotted in \cref{fig:dVcnt}. It decreases to a steady value of about $2$ for the $L^2$ flow and $3$ for surface diffusion, indicating that the nonlinear solver is efficient and robust.

Unlike mean curvature flow, which shrinks a smooth closed convex surface to a point, the $L^2$ flow considered here exhibits a
positive volume plateau. This behavior can be explained by the
competing contributions to the energy \eqref{eq:surface-coupled-density}. Under a uniform rescaling $\Gamma \mapsto \lambda\Gamma$, the surface, quadratic-curvature, and quartic-curvature energy contributions scale as \begin{equation}
  W(\lambda\Gamma) = \lambda^2 W_{\text{surf}}(\Gamma) +  W_{H^2}(\Gamma) + \lambda^{-2} W_{H^4}(\Gamma).
\end{equation} Thus, while the surface energy favors
contraction, the quartic-curvature term increasingly penalizes
further shrinkage as the surface becomes smaller. Their competition
provides a finite preferred scale, consistent with the observed
leveling off of both the energy and the enclosed volume as the
surface approaches an equilibrium configuration.

To assess the computational cost, we use the same ellipsoid and coupled
density with $\tau=0.00125$ and $T=0.02$.
Table~\ref{tab:computational-cost} reports the median wall-clock time per
step over three serial runs on an Apple M1 Pro with 16\,GB RAM, using
Python~3.14 and SciPy~1.18.1 with one BLAS thread.
Timings include assembly, Newton iteration, and step diagnostics;
initialization and file output are excluded. The nonlinear tolerance is
$10^{-11}$. Here $\bar I$ and $I_{\max}$ denote the mean and maximum
Newton iteration counts over the time steps.

\begin{table}[!htbp]
\centering\small
\caption{Wall-clock time per step and Newton iterations.}\label{tab:computational-cost}
\begin{tabular}{crrrrrr}
\toprule
 & \multicolumn{3}{c}{$L^2$ flow} & \multicolumn{3}{c}{Surface diffusion} \\
\cmidrule(lr){2-4}\cmidrule(lr){5-7}
$N$ & Time (s) & $\bar I$ & $I_{\max}$ & Time (s) & $\bar I$ & $I_{\max}$ \\
\midrule
162 & 0.0203 & 2.81 & 3 & 0.0447 & 4.00 & 4 \\
642 & 0.102 & 3.00 & 3 & 0.214 & 4.00 & 4 \\
2562 & 0.859 & 3.00 & 3 & 1.57 & 4.00 & 4 \\
\bottomrule
\end{tabular}
\end{table}

\FloatBarrier

\subsection{Morphology of the evolving surfaces}\label{sec:morphology}

We next examine the shapes produced by the $L^2$ flow and surface
diffusion. The colors indicate the discrete curvature $H^m$.
The first three examples use separable surface and bending
energies. The last example introduces orientation-dependent bending
rigidity, coupling the normal and curvature. The ellipsoid and six-lobed surface are discretized with
$642$ vertices and evolved with $\tau=0.00125$. To monitor the mesh,
we use the triangle quality
\begin{equation}\label{eq:mesh-quality}
  q_\sigma=\frac{4\sqrt{3}|\sigma|}{\ell_1^2+\ell_2^2+\ell_3^2},
  \qquad q_{\min}=\min_{\sigma\subset\Gamma^m}q_\sigma,
\end{equation}
where $\ell_1,\ell_2,\ell_3$ are the edge lengths of $\sigma$.
The value is $1$ for an equilateral triangle and tends to $0$ as the
triangle degenerates. We use $q_{\min}<0.05$ as a stopping criterion.

\begin{samepage}
We first consider the ellipsoid \eqref{eq:surface-initial-ellipsoid}
with the regularized anisotropic energy density
\begin{equation}
  f(\n, H)=\gamma(\n)+\frac{\epsilon^2}{2}H^2,\qquad \gamma(\n)=1+\frac{1}{4}(n_1^3+n_2^3+n_3^3),\qquad \epsilon^2=0.05.
\end{equation}
Figures~\ref{fig:morph-ellipsoid-L2} and~\ref{fig:morph-ellipsoid-SD}
display its evolution under the $L^2$ flow and surface diffusion,
respectively. The ellipsoid shrinks under the $L^2$ flow, whereas under
surface diffusion it evolves towards a nearly stationary anisotropic shape.
\end{samepage}

\begin{figure}[!htbp]
  \centering
  \includegraphics[width=1\textwidth]{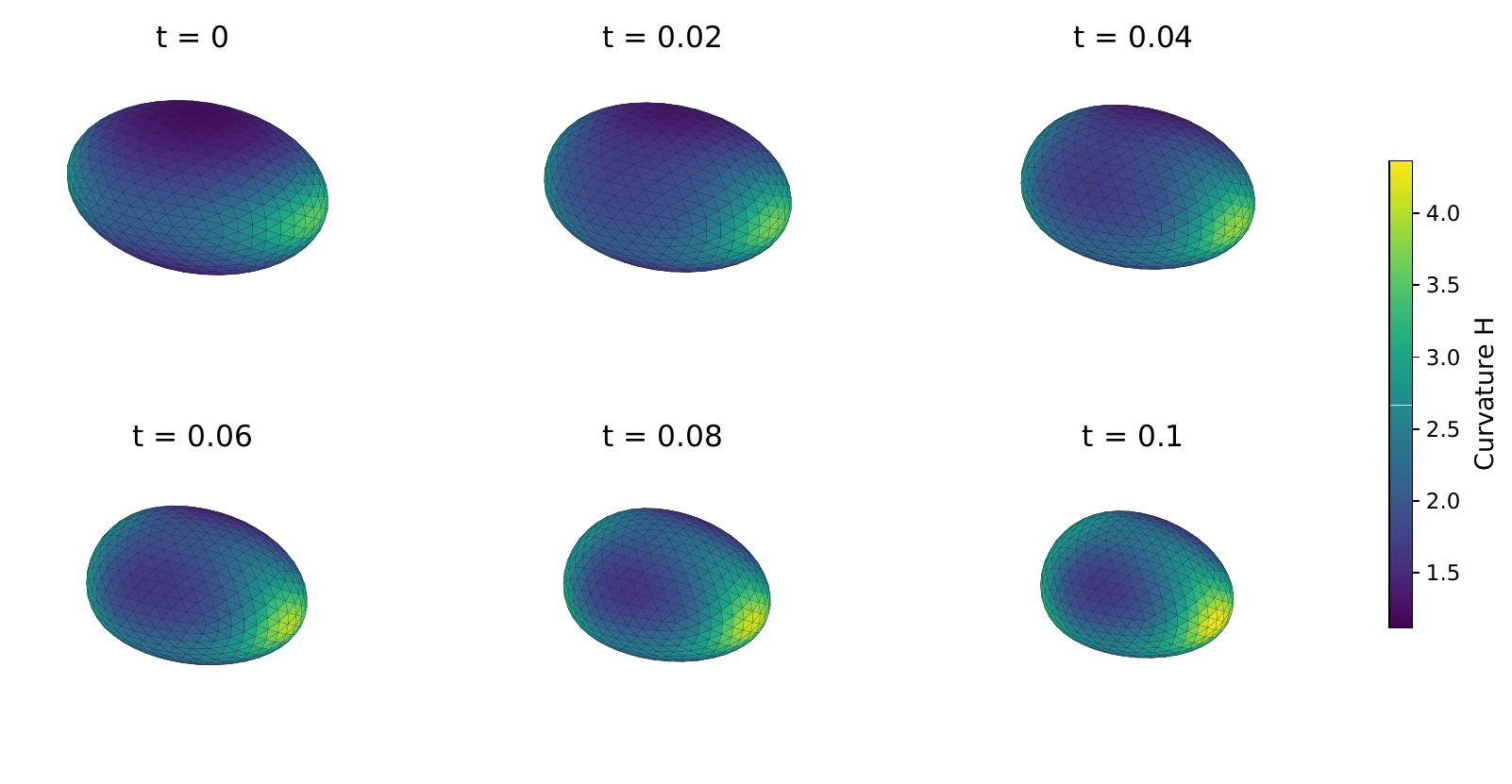}
  \caption{Evolution of the ellipsoid under the $L^2$ flow.}
  \label{fig:morph-ellipsoid-L2}
\end{figure}

\begin{figure}[!htbp]
  \centering
  \includegraphics[width=1\textwidth]{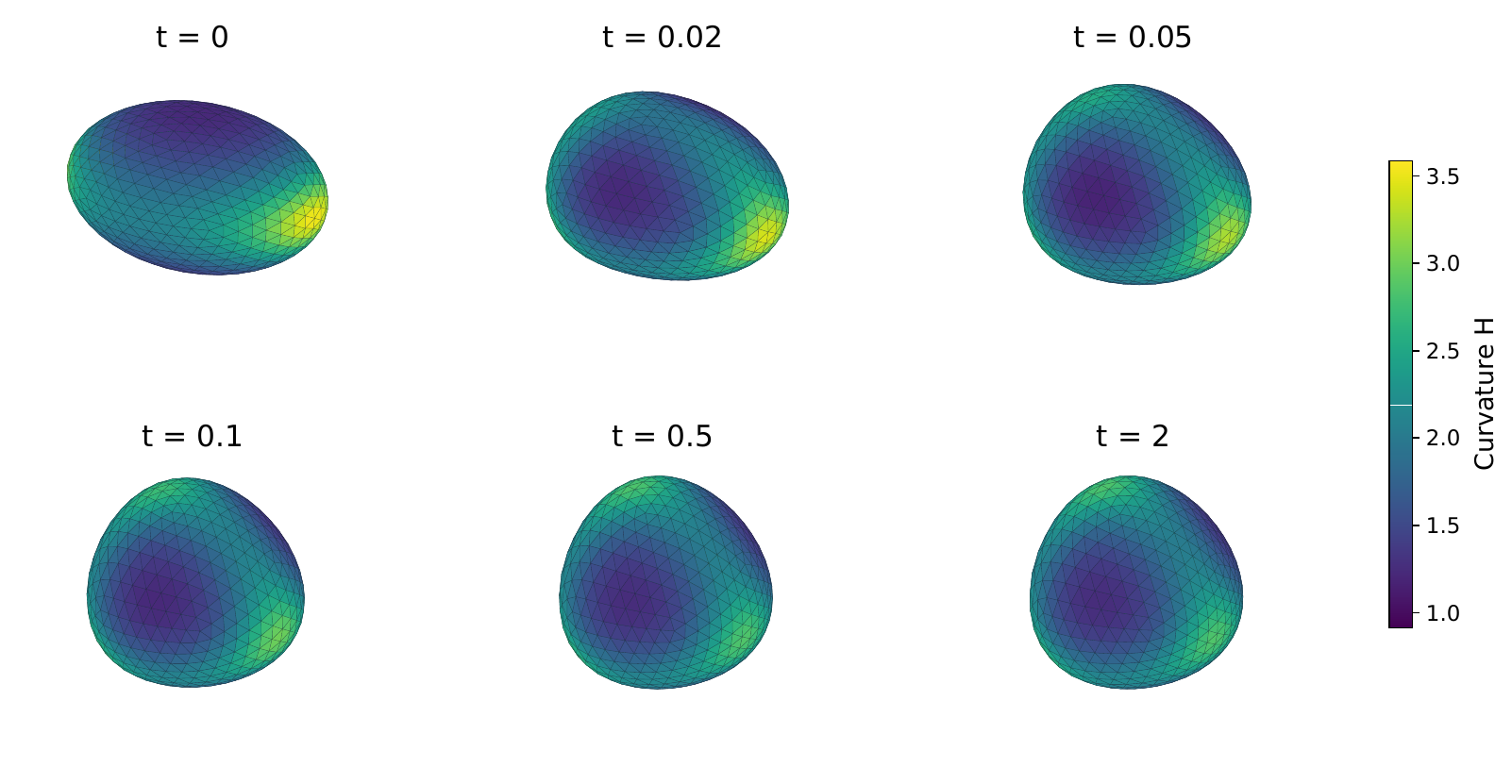}
  \caption{Evolution of the ellipsoid under surface diffusion.}
  \label{fig:morph-ellipsoid-SD}
\end{figure}

\FloatBarrier

\begin{samepage}
Next we consider a six-lobed surface with a regularized anisotropic
energy of cubic symmetry,
\begin{equation}
  f(\n, H)=\gamma(\n)+\frac{\epsilon^2}{2}H^2,\qquad \gamma(\n)=1+\frac{1}{4}(n_1^4+n_2^4+n_3^4),\qquad \epsilon^2=0.05.
\end{equation}
Figures~\ref{fig:morph-6lobed-L2} and~\ref{fig:morph-6lobed-SD} show the evolution of the six-lobed surface under the $L^2$ flow and surface diffusion, respectively. Both flows smooth out the initial lobes and indentations. Similar to the ellipsoid case, the surface continues to shrink under the $L^2$ flow.
Under surface diffusion, it preserves its enclosed volume while relaxing
towards an anisotropic equilibrium.
\end{samepage}

\begin{figure}[!htbp]
  \centering
  \includegraphics[width=1\textwidth]{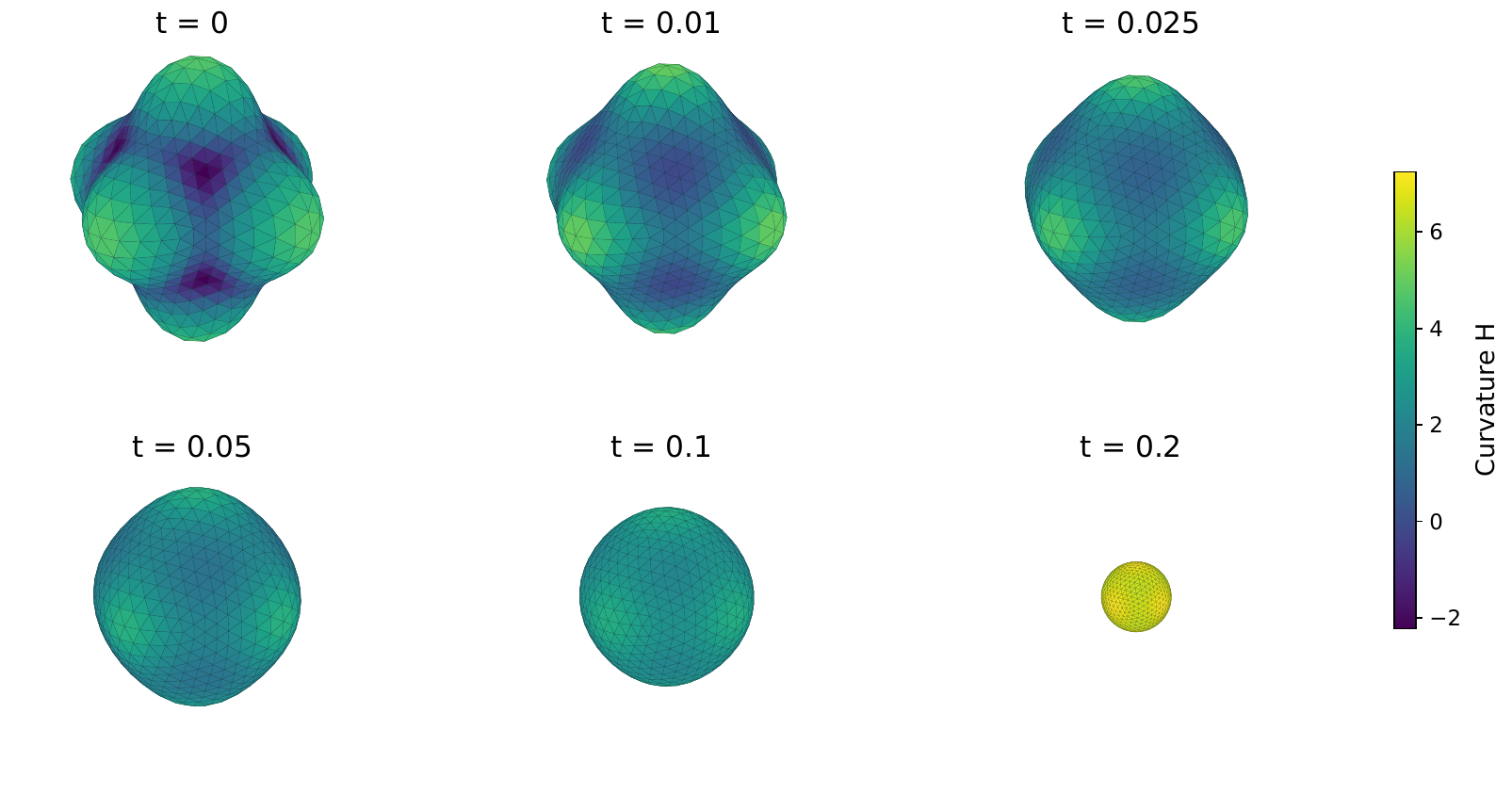}
  \caption{Evolution of the six-lobed surface under the $L^2$ flow.}
  \label{fig:morph-6lobed-L2}
\end{figure}

\begin{figure}[!htbp]
  \centering
  \includegraphics[width=1\textwidth]{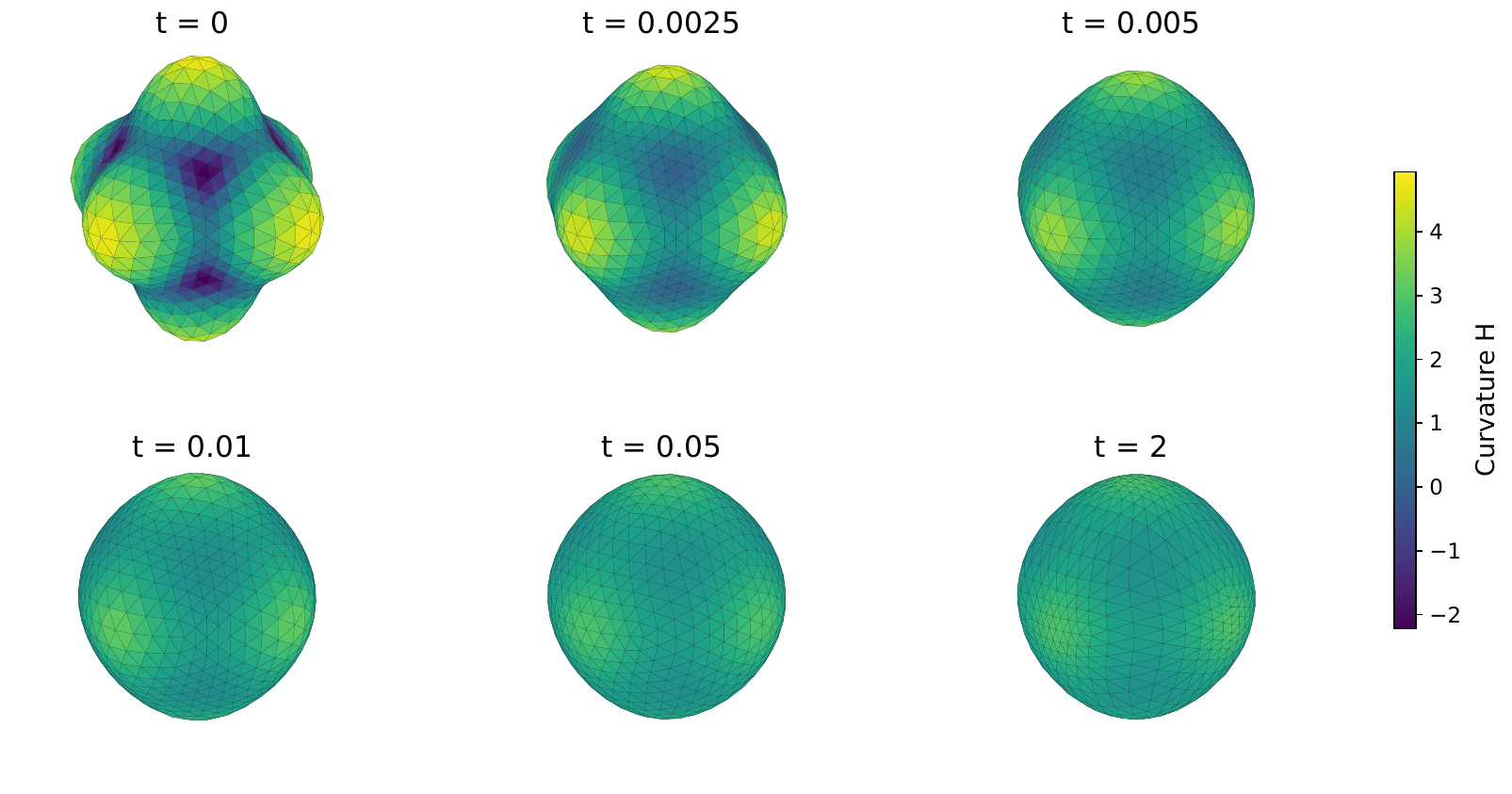}
  \caption{Evolution of the six-lobed surface under surface diffusion.}
  \label{fig:morph-6lobed-SD}
\end{figure}

\FloatBarrier

\begin{samepage}
To examine the evolution of a surface with nontrivial topology, we take an initial torus with major radius $R=1$ and minor radius $r=0.6$. The energy density is chosen as
\begin{equation}
  f(\n, H)=\gamma(\n)+\frac{\epsilon^2}{2}H^2,\qquad \gamma(\n)=1+\frac{1}{8}(n_1^3+n_2^3+n_3^3),\qquad \epsilon^2=0.05.
\end{equation}
As shown in Figure~\ref{fig:morph-torus-L2}, the torus contracts
and its central hole progressively narrows. Both the enclosed volume
and the energy decrease monotonically during the evolution.
\end{samepage}

\begin{figure}[!htbp]
  \centering
  \includegraphics[width=1\textwidth]{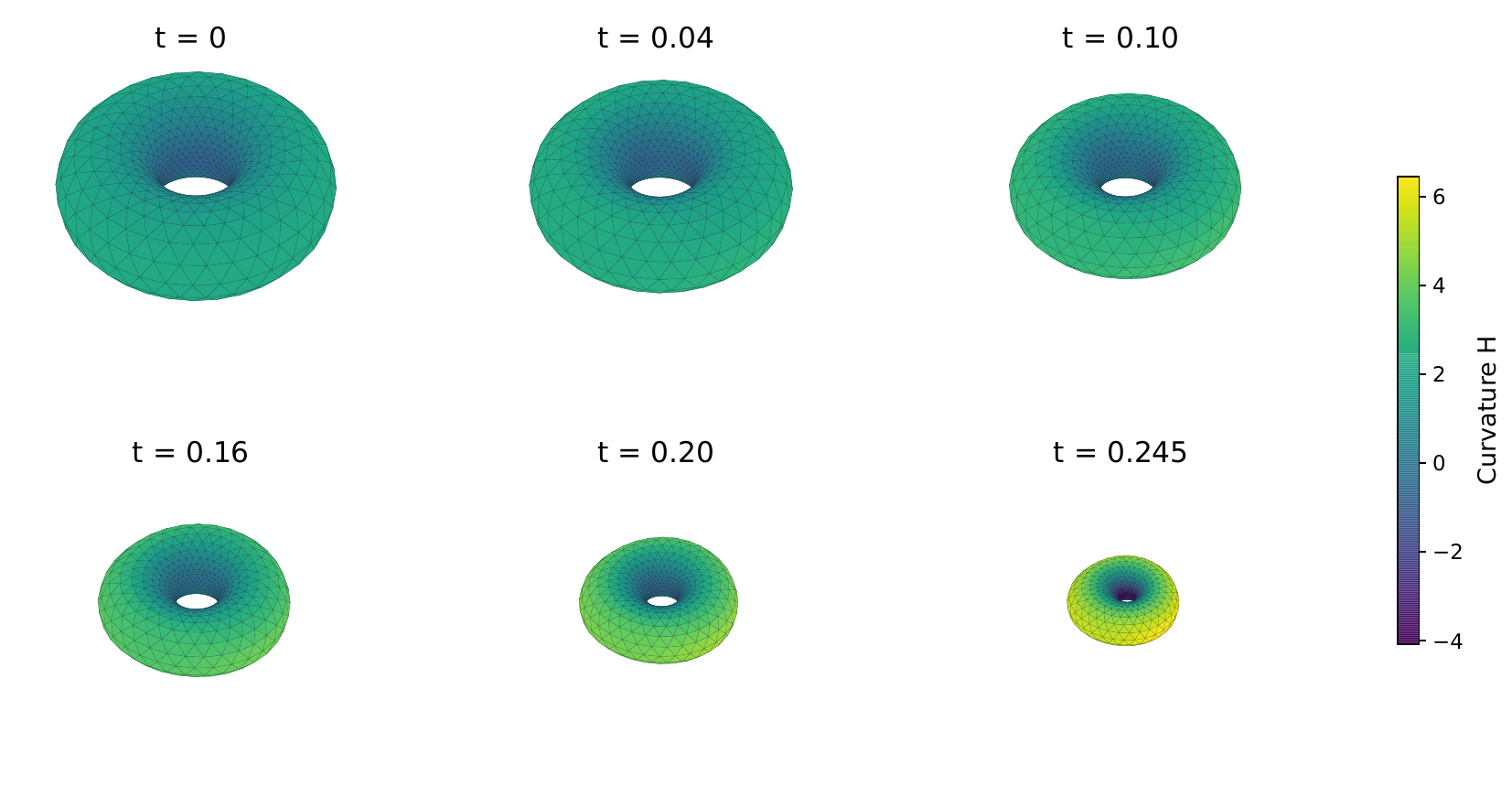}\\
  \includegraphics[width=1\textwidth]{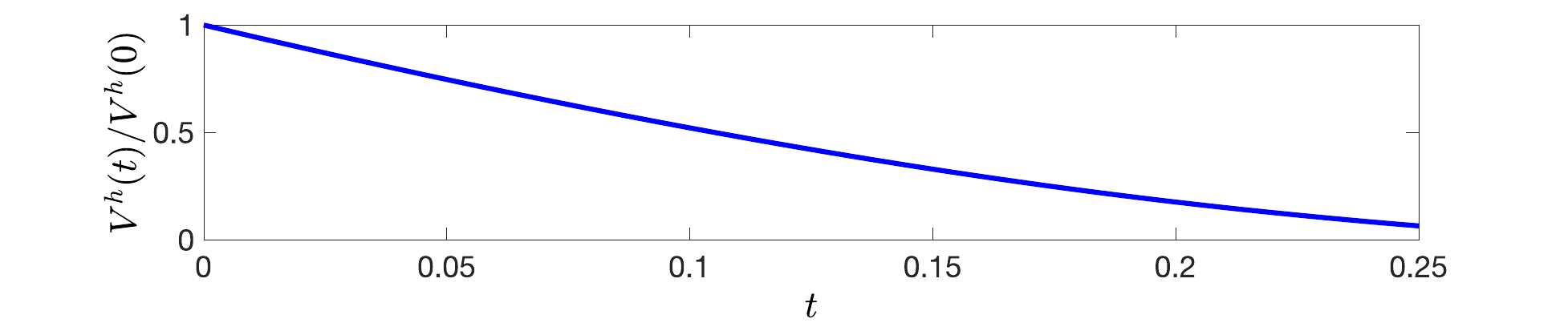}\\
  \includegraphics[width=1\textwidth]{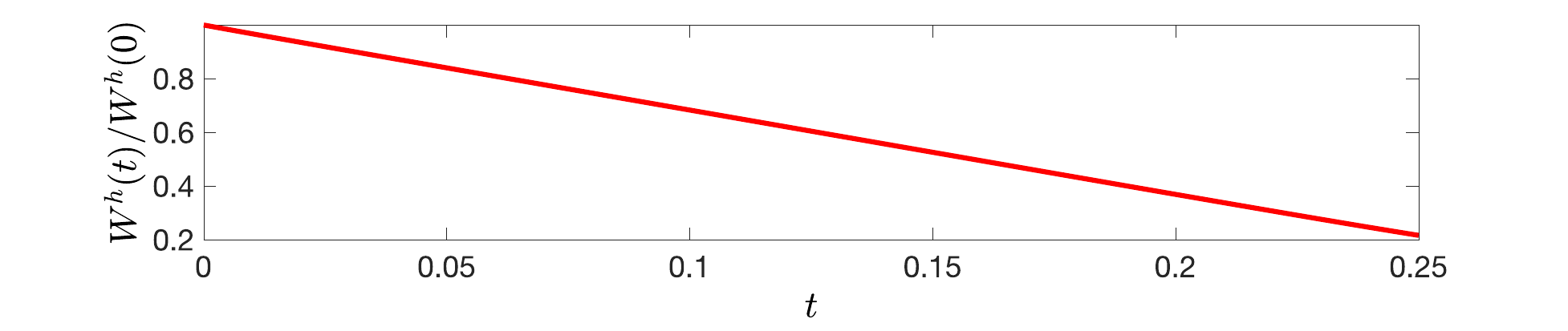}
  \caption{Evolution of the torus under the $L^2$ flow with $896$ vertices and $\tau=0.005$.}
  \label{fig:morph-torus-L2}
\end{figure}

\clearpage
\begin{samepage}
Finally, we consider surface diffusion of a biconcave disk with an
orientation-dependent Helfrich-type energy. The classical Helfrich model
for lipid bilayers penalizes deviations from a spontaneous curvature
\cite{helfrich1973elastic}, which may reflect leaflet asymmetry
\cite{seguin2014microphysical}. The biconcave initial shape is motivated by
the connection between bending energy and red-cell shapes
\cite{canham1970minimum}. We choose
\begin{equation}\label{eq:biconcave-helfrich-density}
  \begin{aligned}
    f(\n,H)&=\gamma(\n)+\frac{\epsilon^2}{2}a(\n)(H-H_\ast)^2,\\
    \gamma(\n)&=1+\frac14\sum_{i=1}^3 n_i^4,
    \qquad a(\n)=1+2n_3^2,
  \end{aligned}
\end{equation}
with $\epsilon^2=0.05$ and $H_\ast=2$.

The density is nonseparable and strictly convex in $H$.
For a given curvature deviation, the bending penalty is three times larger
when $\n$ is parallel to $\boldsymbol e_3$ than when it is perpendicular
to $\boldsymbol e_3$. The orientation dependence and coefficients are chosen for this
experiment and are not fitted to a particular membrane material. No membrane-area constraint
is imposed.

We use $2562$ vertices and $\tau=0.005$.
Figure~\ref{fig:morph-biconcave-SD} shows that the central depressions
gradually disappear as the disk becomes more rounded. The energy decreases
rapidly at first and then more slowly, reaching
$W^h(0.4)/W^h(0)\approx0.6748$, while the relative volume change remains
below $1.5\times10^{-13}$ in magnitude. The computation is stopped at
$t=0.4$ when the mesh-quality threshold is reached.
\end{samepage}

\begingroup
\setlength{\intextsep}{8pt}
\begin{figure}[!htbp]
  \centering
  \includegraphics[width=0.90\textwidth,trim=0 35bp 0 10bp,clip]{figure/morph/morph3SD.pdf}\par\nointerlineskip
  \includegraphics[width=0.90\textwidth,trim=0 5bp 0 2bp,clip]{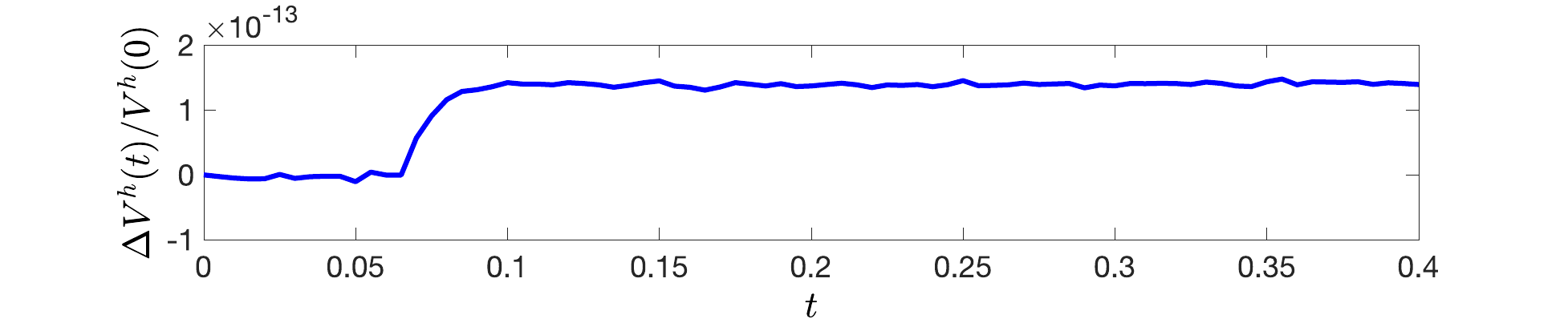}\par\nointerlineskip
  \includegraphics[width=0.90\textwidth,trim=0 5bp 0 5bp,clip]{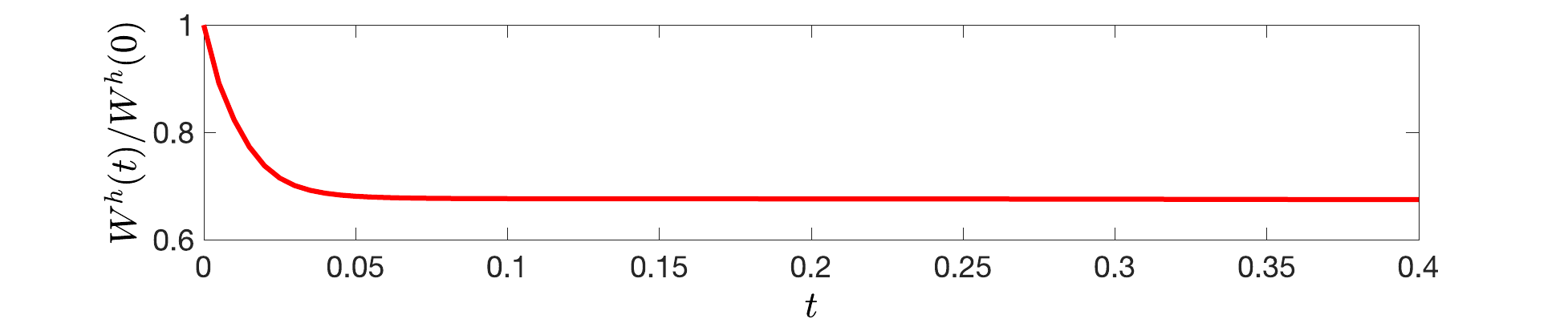}
  \caption{Evolution of the biconcave disk under surface diffusion with energy \eqref{eq:biconcave-helfrich-density}.}
  \label{fig:morph-biconcave-SD}
\end{figure}
\endgroup

\FloatBarrier

\clearpage
\section{Concluding remarks}\label{sec:conclusions}

We have developed a unified structure-preserving PFEM framework for
geometric flows with coupled orientation and curvature dependence.
Closed curves in two dimensions and closed surfaces in three dimensions
are treated by the same piecewise linear formulation. The framework
accommodates the $L^2$ flow, curve or surface diffusion, and the
area- or volume-constrained $L^2$ flow. Under the stated directional
condition and convexity in curvature, the fully discrete schemes
dissipate the energy without a time-step restriction. The diffusion and
constrained schemes also preserve the enclosed area or volume exactly.
These properties extend to nonseparable densities and non-even
anisotropies.

The numerical results support the predicted dissipation and conservation
laws and exhibit approximately second-order convergence in the manifold
distance.
The morphological experiments illustrate how a common
discretization can describe shape relaxation driven by anisotropic
surface tension and bending, including an orientation-dependent
Helfrich-type energy. In particular, the method permits the energy
density to be varied without changing the underlying finite element
spaces or the discrete curvature evolution equation.

The energy estimate does not by itself control mesh quality or ensure
solvability of the nonlinear system. Mesh deterioration in some of the
longer computations therefore remains a practical limitation. Developing
mesh adaptation compatible with the discrete energy and conservation
laws, together with a convergence analysis for the coupled scheme, would
be useful steps toward reliable computations over longer time intervals.

\bibliographystyle{siamplain}
\bibliography{Main}
\end{document}